\newtheorem{theorem}{Theorem}[section]
\newtheorem{lemma}[theorem]{Lemma}
\newtheorem{corollary}[theorem]{Corollary}
\theoremstyle{definition}
\newtheorem{definition}[theorem]{Definition}
\newtheorem{question}[theorem]{Question}
\theoremstyle{remark}
\newcommand{\mc}[1]{\mathcal{#1}}
\newcommand{\la}[0]{\langle}
\newcommand{\ra}[0]{\rangle}
\newcommand{\infi}[0]{\mathtt{in}}
\newcommand{\comp}[0]{\mathtt{c}}
\newcommand{\Th}[0]{\Theory_{\infty}}
\def\Do{\mathcal Dom}
\DeclareMathOperator{\Theory}{Th}
\DeclareMathOperator{\SR}{SR}
\DeclareMathOperator{\Mod}{Mod}
\begin{document}

\title[A first-order theory of Ulm type]{A first-order theory of Ulm type}

\author{Matthew Harrison-Trainor}
\address{Group in Logic and the Methodology of Science\\
University of California, Berkeley\\
 USA}
\email{matthew.h-t@berkeley.edu}
\urladdr{\href{http://www.math.berkeley.edu/~mattht/index.html}{www.math.berkeley.edu/$\sim$mattht}}

\begin{abstract}
The class of abelian $p$-groups are an example of some very interesting phenomena in computable structure theory. We will give an elementary first-order theory $T_p$ whose models are each bi-interpretable with the disjoint union of an abelian $p$-group and a pure set (and so that every abelian $p$-group is bi-interpretable with a model of $T_p$) using computable infinitary formulas. This answers a question of Knight by giving an example of an elementary first-order theory of ``Ulm type'': Any two models, low for $\omega_1^{CK}$, and with the same computable infinitary theory, are isomorphic. It also gives a new example of an elementary first-order theory whose isomorphism problem is $\mathbf{\Sigma}^1_1$-complete but not Borel complete.
\end{abstract}

\maketitle

\section{Introduction}

The class of abelian $p$-groups is a well-studied example in computable structure theory. A simple compactness argument shows that abelian $p$-groups are not axiomatizable by an elementary first-order theory, but they are definable by the conjunction of the axioms for abelian $p$-groups (which are first-order $\forall \exists$ sentences) and the infinitary $\Pi^0_2$ sentence which says that every element is torsion of order some power of $p$.

Abelian $p$-groups are classifiable by their Ulm sequences \cite{Ulm33}. Due to this classification, abelian $p$-groups are examples of some very interesting phenomena in computable structure theory and descriptive set theory. We will define a theory $T_p$ whose models behave like the class of abelian $p$-groups, giving a first-order example of these phenomena. In particular, Theorem \ref{thm:knight} below answers a question of Knight.

\subsection{Infinitary Formulas}

The infinitary logic $\mc{L}_{\omega_1 \omega}$ is the logic which allows countably infinite conjunctions and disjunctions but only finite quantification. If the conjunctions and disjunctions of a formula $\varphi$ are all over computable sets of indices for formulas, then we say that $\varphi$ is computable. We use $\Sigma_\alpha^\infi$ and $\Pi_\alpha^\infi$ to denote the classes of all infinitary $\Sigma_\alpha$ and $\Pi_\alpha$ formulas respectively. We will also use $\Sigma_\alpha^\comp$ and $\Pi_\alpha^\comp$ to denote the classes of computable $\Sigma_\alpha$ and $\Pi_\alpha$ formulas, where $\alpha < \omega_1^{CK}$ the least non-computable ordinal. See Chapter 6 of \cite{AshKnight00} for a more complete description of computable formulas.

\subsection{Bi-Interpretability}

One way in which we will see that the models of $T_p$ are essentially the same as abelian $p$-group is using bi-interpretations using infinitary formulas \cite{MonICM,HTMelnikovMillerMontalban,HTMillerMontalban}. A structure $\mc{A}$ is infinitary interpretable in a structure $\mc{B}$ if there is an interpretation of $\mc{A}$ in $\mc{B}$ where the domain of the interpretation is allowed to be a subset of $\mc{B}^{< \omega}$ and where all of the sets in the interpretation are definable using infinitary formulas. This differs from the classical notion of interpretation, as in model theory [Mar02, Definition 1.3.9], where the domain is required to be a subset of $\mc{B}^n$ for some $n$, and the sets in the interpretation are first-order definable.

\begin{definition}\label{def: eff int}
We say that a structure ${\mc{A}} = (A; P_0^{\mc{A}},P_1^{\mc{A}},...)$ (where $P_i^{\mc{A}}\subseteq A^{a(i)}$) is {\em infinitary interpretable} in $\mc{B}$ if there exists a sequence of relations $(\Do_{\mc{A}}^{\mc{B}}, \sim, R_0, R_1,...)$, definable using infinitary formulas (in the language of $\mc{B}$, without parameters), such that
\begin{enumerate}
	\item $\Do^{\mc{B}}_{\mc{A}}\subseteq \mc{B}^{<\omega}$,
	\item $\sim$  is an equivalence relation on $\Do^{\mc{B}}_{{\mc{A}}}$, 
	\item $R_i\subseteq (B^{<\omega})^{a(i)}$ is closed under $\sim$ within $\Do_{\mc{A}}^{\mc{B}}$,
\end{enumerate}
and there exists a function $f^{\mc{B}}_{\mc{A}} \colon \Do^{\mc{B}}_{{\mc{A}}} \to {\mc{A}}$ which induces an isomorphism: 
\[
(\Do^{\mc{B}}_{\mc{A}}/\sim; R_0/\sim,R_1/\sim,...) \cong (A; P_0^{\mc{A}},P_1^{\mc{A}},...),
\]
where $R_i/ \sim$ stands for the $\sim$-collapse of $R_i$.
\end{definition}

Two structures $\mc{A}$ and $\mc{B}$ are infinitary bi-interpretable if they are each effectively interpretable in the other, and moreover, the composition of the interpretations---i.e., the isomorphisms which map $\mc{A}$ to the copy of $\mc{A}$ inside the copy of $\mc{B}$ inside $\mc{A}$, and $\mc{B}$ to the copy of $\mc{B}$ inside the copy of $\mc{A}$ inside $\mc{B}$---are definable.

\begin{definition}
\label{defn:eff-biinterpretable}
Two structures $\mc{A}$ and $\mc{B}$ are {\em infinitary bi-interpretable} if there are infinitary
interpretations of each structure in the other as in Definition \ref{def: eff int} such that the compositions
\[
f^{\mc{A}}_{\mc{B}} \circ \tilde{f}^{\mc{B}}_{\mc{A}}\colon \Do_{\mc{B}}^{(\Do_{\mc{A}}^{\mc{B}})} \to {\mc{B}} 
\quad\mbox{ and }\quad
f^{\mc{B}}_{\mc{A}} \circ \tilde{f}^{\mc{A}}_{\mc{B}}\colon \Do_{\mc{A}}^{(\Do_{\mc{B}}^{\mc{A}})} \to {\mc{A}} 
\]
are  definable in ${\mc{B}}$ and ${\mc{A}}$ respectively.
(Here, we have  $\Do_{\mc{B}}^{(\Do_{\mc{A}}^{\mc{B}})}\subseteq (\Do_{\mc{A}}^{\mc{B}})^{<\omega}$, and $\tilde{f}^{\mc{B}}_{\mc{A}}\colon (\Do_{\mc{A}}^{\mc{B}})^{<\omega}\to {\mc{A}}^{<\omega}$ is the obvious extension of $f^{\mc{B}}_{\mc{A}}\colon \Do_{\mc{A}}^{\mc{B}}\to {\mc{A}}$ mapping $\Do_{\mc{B}}^{(\Do_{\mc{A}}^{\mc{B}})}$ to $\Do_{\mc{B}}^{\mc{A}}$.)
\end{definition}

If we ask that the sets and relations in the interpretation (or bi-interpretation) be (uniformly) relatively intrinsically computable, i.e., definable by both a $\Sigma^{\comp}_1$ formula and a $\Pi^{\comp}_1$ formula, then we say that the interpretation (or bi-interpretation) is effective. Any two structures which are effectively bi-interpretable have all of the same computability-theoretic properties; for example, they have the same degree spectra and the same Scott rank. See \cite[Lemma 5.3]{MonICM}.

Here, we will use interpretations which use (lightface) $\Delta^{\comp}_2$ formulas. It is no longer true that any two structures which are $\Delta^{\comp}_2$-bi-interpretable have all of the same computability-theoretic properties, but it is true, for example, that any two such structures either both have computable, or both have non-computable, Scott rank.

\begin{theorem}\label{thm:bi-int}
Each abelian $p$-group is effectively bi-interpretable with a model of $T_p$.
Each model of $T_p$ is $\Delta^{\comp}_2$-bi-interpretable with the disjoint union of an abelian $p$-group and a pure set.
\end{theorem}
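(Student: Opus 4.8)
The plan is to prove the two halves separately, in each case by writing down explicit interpretations in both directions and then verifying the composition conditions of Definition \ref{defn:eff-biinterpretable}. The governing intuition is that a model of $T_p$ packages an abelian $p$-group together with auxiliary data that renders the Ulm-height analysis first-order, and that the gap between \emph{effective} and $\Delta^{\comp}_2$ in the two statements is accounted for entirely by the fact that a first-order theory cannot force the group sort to be torsion.

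For the first statement I would fix an abelian $p$-group $G$ and construct from it a canonical model $M = M(G) \models T_p$, adjoining to (a copy of) $G$ exactly the auxiliary sorts and relations that $T_p$ requires, where every added piece of data is a direct function of $G$. The point of the construction is that in the standard model $M(G)$ the group sort is literally $G$ (nothing but torsion) and the auxiliary sort is explicitly coordinatized by tuples from $G$, so that $G$ is interpreted in $M(G)$ by projecting onto the group sort and $M(G)$ is interpreted in $G$ by the corresponding tuple coding, both via formulas that are simultaneously $\Sigma^{\comp}_1$ and $\Pi^{\comp}_1$. Once both interpretations are in hand, the compositions $f^{G}_{M}\circ\tilde f^{M}_{G}$ and $f^{M}_{G}\circ\tilde f^{G}_{M}$ are the identity up to the coding of tuples and up to $\sim$, and checking that each is $\Delta^{\comp}_1$-definable is routine bookkeeping; this yields an effective bi-interpretation as in Definition \ref{def: eff int}.

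The second statement is where the essential difficulty lies, and it explains why the recovered bi-interpretation is only $\Delta^{\comp}_2$. Given an arbitrary $M \models T_p$, the group sort need not be torsion, since by compactness $T_p$ admits models with elements of non-$p$-power or infinite order. The genuine abelian $p$-group is the torsion subgroup $G = \{x : \bigvee_n p^n x = 0\}$, which is cut out by a $\Sigma^{\comp}_1$ formula whose negation is $\Pi^{\comp}_1$; \emph{neither side is $\Delta^{\comp}_1$}, so isolating $G$ requires passing to $\Delta^{\comp}_2$, and this single point is exactly why the bi-interpretation drops from effective to $\Delta^{\comp}_2$. The pure set $S$ is then assembled from the non-torsion elements of the group sort together with the auxiliary sort of $M$, with all structure on them erased.

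The main obstacle, and the step I would spend the most care on, is reconstituting $M$ from the pair $(G,S)$ and verifying that the two interpretations compose to a bi-interpretation in the sense of Definition \ref{defn:eff-biinterpretable}. Since $S$ carries no structure, rebuilding the group relations among the non-torsion elements, the relations between them and $G$, and the full auxiliary sort must be done canonically from $(G,S)$ up to isomorphism; this forces one to show that $T_p$ pins these parts down up to isomorphism by purely combinatorial data (so that, for instance, the non-torsion part is a direct sum determined by $|S|$). I would then check that re-extracting $(G,S)$ from the reconstructed model returns the original objects up to the definable $\sim$, and, crucially, that isolating the standard torsion part and reconstituting the forgotten structure nowhere exceeds $\Delta^{\comp}_2$. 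Because $\Delta^{\comp}_2$-bi-interpretability preserves having computable versus non-computable Scott rank, controlling this complexity bound is the delicate point on which the whole argument turns.
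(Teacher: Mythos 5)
You correctly locate the asymmetry between the two halves: the only non-effective ingredient is that the domain of the group inside a model of $T_p$ is $\Sigma^{\comp}_1$- but not $\Pi^{\comp}_1$-definable, which is exactly the paper's own explanation for the drop to $\Delta^{\comp}_2$. But your picture of where that domain comes from, and hence the architecture of your second half, has a genuine gap. You posit a group sort carrying a \emph{total} operation, take $G$ to be its torsion subgroup, and dump the non-torsion elements (together with the auxiliary sort) into the pure set $S$. With a total operation this cannot work: non-torsion elements carry additive structure --- how they combine with one another and with $G$ --- and no first-order theory can pin that structure down ``by purely combinatorial data'' recoverable from $|S|$ alone. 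By compactness one gets models whose torsion parts agree and whose sets of non-torsion elements are equinumerous but which are non-isomorphic (for instance, non-torsion parts of different torsion-free rank), so the passage $M \mapsto (G,S)$ already fails to be injective on isomorphism types, let alone invertible by definable maps as Definition \ref{defn:eff-biinterpretable} demands. You flag ``reconstituting $M$ from $(G,S)$'' as the delicate point, but under your architecture there is no way through it.

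The paper avoids the problem by designing $T_p$ so that there is nothing to reconstitute. Addition is not a total operation but a family of ternary relations $P^n_{\ell,m}$ graded by torsion order, together with predicates $R_n$ that the axioms force to consist of exactly the elements of order $p^n$ of the resulting group $\bigcup_n R_n$; totality of addition is asserted only on $\bigcup_n R_n$ (the grading is what makes this expressible by a first-order schema), and axiom (A1) guarantees that no relation holds of any element outside $\bigcup_n R_n$. Consequently the group extracted from a model of $T_p$ is automatically a $p$-group --- there is no non-torsion part to worry about --- and the leftover elements literally satisfy no atomic formulas, so they form a pure set determined by its cardinality. The bi-interpretation is then the near-identity in both directions ($\mathfrak{M}(G)$ even has the same domain as $G$, with no auxiliary sort), with the single $\Sigma^{\comp}_1$-only formula $\bigvee_n R_n(x)$ accounting for the $\Delta^{\comp}_2$ in the statement. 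If you want to repair your proposal, the fix is not a cleverer reconstruction step but this change in how the group operation is presented.
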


\noindent This theorem will follow from the constructions in Sections \ref{sec:m} and \ref{sec:g}. Given a model $\mc{M}$ of $T_p$, $\mc{M}$ is bi-interpretable with an abelian $p$-group $G$ and a pure set. The domain of the copy of $G$ inside of $\mc{M}$ is definable by a $\Sigma^{\comp}_1$ formula but not by a $\Pi^{\comp}_1$ formula. This is the only part of the bi-interpretation which is not effective.

\subsection{Classification via Ulm Sequences}

Let $G$ be an abelian group. For any ordinal $\alpha$, we can define $p^\alpha G$ by transfinite induction:
\begin{itemize}
	\item $p^0 G = G$;
	\item $p^{\alpha + 1} G = p (p^\alpha G)$;
	\item $p^\beta G = \bigcap_{\alpha < \beta} p^\alpha G$ if $\beta$ is a limit ordinal.
\end{itemize}
These subgroups $p^\alpha G$ form a filtration of $G$. This filtration stabilizes, and we call the smallest ordinal $\alpha$ such that $p^\alpha G = p^{\alpha + 1} G$ the length of $G$. We call the intersection $p^\infty G$ of these subgroups, which is a $p$-divisible group, the $p$-divisible part of $G$. Any countable $p$-divisible group is isomorphic to some direct product of the Pr\"ufer group
\[ \mathbb{Z}(p^\infty) = \mathbb{Z}[1/p,1/p^2,1/p^3,\ldots] / \mathbb{Z}.\]

Denote by $G[p]$ the subgroup of $G$ consisting of the $p$-torsion elements. The $\alpha$th Ulm invariant $u_\alpha(G)$ of $G$ is the dimension of the quotient
\[ \big( p^\alpha G \big)[p] \; /  \; \big(p^{\alpha + 1} G \big) [p]\]
as a vector space over $\mathbb{Z} / p\mathbb{Z}$.

\begin{theorem}[Ulm's Theorem, see \cite{Fuchs}]
Let $G$ and $H$ be countable abelian p-groups such that for every ordinal $\alpha$ their $\alpha$th Ulm invariants are equal, and the $p$-divisible parts of $G$ and $H$ are isomorphic. Then $G$ and $H$ are isomorphic.
\end{theorem}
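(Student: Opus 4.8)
The plan is to prove this by the classical back-and-forth method of Ulm and Kaplansky, which succeeds precisely because $G$ and $H$ are countable. First I would dispose of the $p$-divisible part. Since a $p$-divisible $p$-group is divisible and divisible subgroups are direct summands, I would write $G = D_G \oplus R_G$ and $H = D_H \oplus R_H$, where $D_G = p^\infty G$ and $D_H = p^\infty H$ are the $p$-divisible parts and $R_G, R_H$ are reduced. A direct computation shows $p^\alpha D = D$ for all $\alpha$, so $(p^\alpha D)[p] = (p^{\alpha+1}D)[p]$ and the divisible summand contributes nothing to any Ulm invariant; hence $u_\alpha(R_G) = u_\alpha(G) = u_\alpha(H) = u_\alpha(R_H)$ for every $\alpha$. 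Because $D_G \cong D_H$ is given, it then suffices to prove the theorem for reduced groups, after which $G = D_G \oplus R_G \cong D_H \oplus R_H = H$.

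For reduced groups the height function $h_G \colon G \to \mathrm{Ord} \cup \{\infty\}$, where $h_G(x)$ is the largest $\alpha$ with $x \in p^\alpha G$, takes the value $\infty$ only at $0$. I would construct an isomorphism as the union of an increasing chain of \emph{height-preserving} isomorphisms $\phi_n \colon G_n \to H_n$ between finite subgroups, meaning $h_G(x) = h_H(\phi_n(x))$ for all $x \in G_n$. Fixing enumerations of $G$ and $H$ (this is where countability is essential), at even stages I force the next element of $G$ into the domain and at odd stages the next element of $H$ into the range; the union is then a height-preserving bijection, i.e.\ an isomorphism. Everything thus reduces to a single \emph{extension lemma}: given a height-preserving $\phi \colon G' \to H'$ between finite subgroups and an element $a \in G$, one can extend $\phi$ to a height-preserving isomorphism whose domain is a finite subgroup containing $a$.

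To prove the extension lemma I would first reduce to adjoining a single $a$ with $pa \in G'$: if $n$ is least with $p^n a \in G'$, adjoin $p^{n-1}a, p^{n-2}a, \dots, a$ one at a time, each having its $p$-multiple already in the current domain. I may further replace $a$ by a representative of $a + G'$ of maximal height, so that $a$ is \emph{proper} with respect to $G'$; set $\alpha = h_G(a)$. Writing $c = pa \in G'$ and $d = \phi(c) \in H'$, height-preservation gives $h_H(d) = h_G(c) \ge \alpha + 1$, so $d$ has a $p$-th root lying in $p^\alpha H$. The task is to find a root $b \in H$ of $d$ of height exactly $\alpha$ that is proper with respect to $H'$; then $a \mapsto b$ extends $\phi$ and properness is exactly what keeps the extension height-preserving on the new coset. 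Since adjusting any root by a socle element $s \in H[p]$ of height $\alpha$ leaves $pb = d$ unchanged while correcting the height and properness, the whole lemma comes down to producing a suitable $s$.

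The existence of such an $s$ is precisely what the equality of Ulm invariants delivers, and this is the crux of the argument. The quotient $(p^\alpha H)[p] / (p^{\alpha+1}H)[p]$ has dimension $u_\alpha(H)$, and the finite subgroup $H'$ together with the properness requirement ``uses up'' only finitely many of these dimensions; because $\phi$ is height-preserving and matches $G'$ with $H'$, exactly the same finite number is used up on the $G$ side, where a witness already exists (namely the one coming from $a$ itself). As $u_\alpha(G) = u_\alpha(H)$, a fresh socle element of height $\alpha$ must therefore remain available in $H$. I expect the main obstacle to be making this counting argument precise --- controlling heights within the socle, handling the interaction of properness with the finite subgroups, and treating limit stages $\alpha$ carefully, where $p^\alpha H = \bigcap_{\beta < \alpha} p^\beta H$ makes the behaviour of heights more delicate --- all while keeping every subgroup finite so that the back-and-forth closes up.
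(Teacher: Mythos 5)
The paper offers no proof of this statement at all---it is quoted as a classical theorem with a pointer to Fuchs---so there is no in-paper argument to compare against. Your outline is the standard Ulm--Kaplansky back-and-forth, and most of the scaffolding is right: $p^\infty G$ is the maximal divisible subgroup, hence a direct summand contributing nothing to any Ulm invariant, so one may pass to the reduced parts; the isomorphism is built as a union of height-preserving isomorphisms between finite subgroups; everything reduces to the extension lemma for a single $a$ with $pa \in G'$, chosen proper in its coset; and properness of $a$ and of its image $b$ at the same height $\alpha$ is indeed exactly what makes the extended map height-preserving on the new cosets.

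The step that fails as written is the uniform reduction to ``produce a socle element $s \in (p^\alpha H)[p]$ of height exactly $\alpha$,'' with the matching witness on the $G$ side ``coming from $a$ itself.'' That works only when $h_G(pa) \ge \alpha+2$: then $pa = pc'$ for some $c'$ of height $>\alpha$, and $a - c'$ really is a socle element of height exactly $\alpha$, proper with respect to $G'$, whose existence transfers to $H$ through $u_\alpha(G)=u_\alpha(H)$ via the finite-dimensional counting you describe. But when $h_G(pa) = \alpha+1$, the element $a$ contributes nothing to $(p^\alpha G)[p]$, and $u_\alpha(G)=u_\alpha(H)$ may be $0$ --- take $G = H = \mathbb{Z}/p^2\mathbb{Z}$, $G' = 0$, $a$ a generator, $\alpha = 0$: there is no socle element of height $0$ at all, yet the extension must still be carried out. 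The classical proof therefore splits on the coset $a+G'$: if every proper representative $a'$ satisfies $h(pa') = h(a')+1$, one takes any root $b \in p^\alpha H$ of $\phi(pa')$ (automatically of height $\alpha$) and proves it is proper directly, with no appeal to Ulm invariants; only when some proper representative has $h(pa') > h(a')+1$ does the socle/Ulm-invariant argument enter. This case analysis, together with the counting argument you yourself flag as not yet precise, is the actual content of the theorem, so the crux of the proof remains open in your proposal.
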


\subsection{Scott Rank and Computable Infinitary Theories}

Scott \cite{Scott65} showed that if $\mc{M}$ is a countable structure, then there is a sentence $\varphi$ of $\mc{L}_{\omega_1 \omega}$ such that $\mc{M}$ is, up to isomorphism, the only countable model of $\varphi$. We call such a sentence a Scott sentence for $\mc{M}$.
There are many different definitions \cite[Sections 6.6 and 6.7]{AshKnight00} of the Scott rank of $\mc{M}$, which differ only slightly in the ranks they assign. The one we will use, which comes from \cite{Montalban15}, defines the Scott rank of $\mc{A}$ to be the least ordinal $\alpha$ such that $\mc{A}$ has a $\Pi^{\infi}_{\alpha+1}$ Scott sentence. We denote the Scott rank of a structure $\mc{A}$ by $\SR(\mc{A})$. It is always the case that $\SR(\mc{A}) \leq \omega_1^{\mc{A}} + 1$ \cite{Nadel74}. We could just as easily use any of the other definitions of Scott rank; for all of these definitions, given a computable structure $\mc{A}$:
\begin{enumerate}
\item  $\mathcal{A}$ has computable Scott rank if and only if there is a computable ordinal $\alpha$ such that for all tuples $\bar{a}$ in $\mathcal{A}$, the orbit of $\bar{a}$ is defined by a computable $\Sigma_\alpha$ formula.  
\item  $\mathcal{A}$ has Scott rank $\omega_1^{CK}$ if and only if for each tuple $\bar{a}$, the orbit is defined by a computable infinitary formula, but for each computable ordinal $\alpha$, there is a tuple $\bar{a}$ whose orbit is not defined by a computable $\Sigma_\alpha$ formula.  
\item  $\mathcal{A}$ has Scott rank $\omega_1^{CK}+1$ if and only if there is a tuple $\bar{a}$ whose orbit is not defined by a computable infinitary formula.  
\end{enumerate}

Given a structure $\mc{M}$, define the computable infinitary theory of $\mc{M}$, $\Th(\mc{M})$, to be collection of computable $\mc{L}_{\omega_1 \omega}$ sentences true of $\mc{M}$. We can ask, for a given structure $\mc{M}$, whether $\Th(\mc{M})$ is $\aleph_0$-categorical, or whether there are other countable models of $\Th(\mc{M})$. For $\mc{M}$ a hyperarithmetic structure:
\begin{enumerate}
	\item If $\SR(\mc{M}) < \omega_1^{CK}$, then $\Th(\mc{M})$ is $\aleph_0$-categorical. Indeed, $\mc{M}$ has a computable Scott sentence \cite{Nadel74}.
		\item If $\SR(\mc{M}) = \omega_1^{CK}$, then $\Th(\mc{M})$ may or may not be $\aleph_0$-categorical \cite{HTIgusaKnight}.
	\item If $\SR(\mc{M}) = \omega_1^{CK} + 1$, then $\Th(\mc{M})$ is not $\aleph_0$-categorical  as $\mc{M}$ has a non-principal type which may be omitted.
\end{enumerate}
In the case of abelian $p$-groups, we can say something even when we replace the assumption that $\mc{M}$ is hyperarithmetic with the assumption that $\omega_1^G = \omega_1^{CK}$.

\begin{definition}[Definition 6 of \cite{FokinaKnightMelnikovQuinnSafranski11}]
A class of countable structures has \textit{Ulm type} if for any two structures $\mc{A}$ and $\mc{B}$ in the class, if $\omega_1^{\mc{A}} = \omega_1^{\mc{B}} = \omega_1^{CK}$ and $\Th(\mc{A}) = \Th(\mc{B})$, then $\mc{A}$ and $\mc{B}$ are isomorphic.
\end{definition}

\noindent It is well-known that abelian $p$-groups are of Ulm type; however, we do not know of a good reference with a complete proof, so we will give one in Section \ref{sec:abelian}. We also note that there are indeed non-hyperarithmetic abelian $p$-groups $G$ with $\SR(G) < \omega_1^{CK}$.

Knight asked whether there was a (non-trivial) first-order theory of Ulm type. By a non-trivial example, we mean that the elementary first-order theory should have non-hyperarithmetic models which are low for $\omega_1^{CK}$. Our theory $T_p$ is such an example.

\begin{theorem}\label{thm:knight}
The models of $T_p$ are of Ulm type. Moreover, given $\mc{M} \models T_p$ with $\omega_1^{CK} = \omega_1^{\mc{M}}$ and $\SR(\mc{M}) < \omega_1^{CK} = \omega_1^{\mc{M}}$, $\Th(\mc{M})$ is $\aleph_0$-categorical.
\end{theorem}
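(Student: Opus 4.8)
The plan is to push both statements across the bi-interpretation of Theorem~\ref{thm:bi-int} and reduce to the corresponding facts about abelian $p$-groups established in Section~\ref{sec:abelian}. Let $\Gamma$ denote the uniform $\Delta^{\comp}_2$ interpretation that sends a model $\mc{N}\models T_p$ to the disjoint union $\Gamma(\mc{N})=G_{\mc{N}}\sqcup S_{\mc{N}}$ of an abelian $p$-group and a pure set, and let $\Delta$ denote the reverse interpretation, so that $\Delta(\Gamma(\mc{N}))\cong\mc{N}$ for every $\mc{N}\models T_p$. Two transfer principles drive the argument. \emph{(Theories.)} Since $\Gamma$ and $\Delta$ are given by computable infinitary formulas, there is a fixed translation $\psi\mapsto\psi^{\ast}$ of computable infinitary sentences, depending only on the interpretation, with $\Gamma(\mc{N})\models\psi\iff\mc{N}\models\psi^{\ast}$; the $\Delta^{\comp}_2$ interpretation only raises the complexity of $\psi^{\ast}$ by a fixed finite amount, so $\psi^{\ast}$ is again computable infinitary. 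Consequently $\Th(\mc{N}_1)=\Th(\mc{N}_2)$ implies $\Th(\Gamma(\mc{N}_1))=\Th(\Gamma(\mc{N}_2))$, and relativizing quantifiers to the two definable parts of the disjoint union also gives $\Th(G_{\mc{N}_1})=\Th(G_{\mc{N}_2})$. \emph{(Ordinals.)} Because the interpretations are $\Delta^{\comp}_2$, a presentation of $\Gamma(\mc{N})$ is computable from the jump of a presentation of $\mc{N}$ and vice versa; as the jump does not change the Church--Kleene ordinal ($\omega_1^{X'}=\omega_1^{X}$), the structure $\Gamma(\mc{N})$, and hence its definable part $G_{\mc{N}}$, is low for $\omega_1^{CK}$ whenever $\mc{N}$ is.

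For the Ulm-type statement, let $\mc{A},\mc{B}\models T_p$ with $\omega_1^{\mc{A}}=\omega_1^{\mc{B}}=\omega_1^{CK}$ and $\Th(\mc{A})=\Th(\mc{B})$. The theory principle gives $\Th(G_{\mc{A}})=\Th(G_{\mc{B}})$ and $\Th(S_{\mc{A}})=\Th(S_{\mc{B}})$, and the ordinal principle gives $\omega_1^{G_{\mc{A}}}=\omega_1^{G_{\mc{B}}}=\omega_1^{CK}$. Since abelian $p$-groups are of Ulm type (Section~\ref{sec:abelian}), $G_{\mc{A}}\cong G_{\mc{B}}$; since a pure set is determined up to isomorphism by its cardinality, which its computable infinitary theory records, $S_{\mc{A}}\cong S_{\mc{B}}$; hence $\Gamma(\mc{A})\cong\Gamma(\mc{B})$. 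As $\Delta$ is isomorphism-invariant and $\Delta(\Gamma(\cdot))\cong\mathrm{id}$, we conclude $\mc{A}\cong\Delta(\Gamma(\mc{A}))\cong\Delta(\Gamma(\mc{B}))\cong\mc{B}$.

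For the categoricity statement, fix $\mc{M}\models T_p$ with $\omega_1^{\mc{M}}=\omega_1^{CK}$ and $\SR(\mc{M})<\omega_1^{CK}$, and write $G:=G_{\mc{M}}$, $S:=S_{\mc{M}}$. By the ordinal principle $\omega_1^{G}=\omega_1^{CK}$, and since $\Delta^{\comp}_2$-bi-interpretation preserves whether the Scott rank is computable, $\SR(G)<\omega_1^{CK}$. Suppose we have shown that $\Th(G)$ is $\aleph_0$-categorical. A pure set has $\aleph_0$-categorical theory, and the two factors of the disjoint union are definable, so relativizing to them shows every countable model of $\Th(G\sqcup S)$ splits as a model of $\Th(G)$ together with a model of $\Th(S)$; hence $\Th(\Gamma(\mc{M}))=\Th(G\sqcup S)$ is $\aleph_0$-categorical as well. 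Now let $\mc{N}$ be any countable model of $\Th(\mc{M})$. As $T_p\subseteq\Th(\mc{M})$ we have $\mc{N}\models T_p$, and since for every computable infinitary sentence $\psi$ either $\psi$ or $\neg\psi$ lies in $\Th(\mc{M})$, we get $\Th(\mc{N})=\Th(\mc{M})$; by the theory principle $\Th(\Gamma(\mc{N}))=\Th(\Gamma(\mc{M}))$, so $\Gamma(\mc{N})\cong\Gamma(\mc{M})$ by $\aleph_0$-categoricity, and therefore $\mc{N}\cong\Delta(\Gamma(\mc{N}))\cong\Delta(\Gamma(\mc{M}))\cong\mc{M}$. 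Note that $\mc{N}$ need not be low for $\omega_1^{CK}$ here; the hypotheses on $\mc{M}$ are used only to make $\Th(G\sqcup S)$ categorical.

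It remains to show $\Th(G)$ is $\aleph_0$-categorical, where $G$ is an abelian $p$-group that is low for $\omega_1^{CK}$ with $\SR(G)<\omega_1^{CK}$; let $H$ be a countable model of $\Th(G)$, necessarily an abelian $p$-group. The key group-theoretic input from Section~\ref{sec:abelian} is that computable Scott rank forces the length $\lambda$ of $G$ to satisfy $\lambda<\omega_1^{CK}$, since an element of height approaching $\omega_1^{CK}$ would force the Scott rank to be at least $\omega_1^{CK}$. For each $\beta<\lambda$ the predicate $x\in p^{\beta}G$ is defined by an effective transfinite recursion of rank $\beta<\omega_1^{CK}$, so the statements ``$u_{\beta}=k$'', ``the length is at most $\lambda$'', and ``the $p$-divisible part has dimension $d$'' are all computable infinitary and therefore lie in $\Th(G)=\Th(H)$. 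Thus $G$ and $H$ have equal Ulm invariants at every level and isomorphic $p$-divisible parts, and Ulm's Theorem gives $H\cong G$. The main obstacles are this last ingredient together with the non-effectiveness of the bi-interpretation: because the interpretation is only $\Delta^{\comp}_2$ rather than effective, the preservation of $\omega_1^{CK}$ and of computable Scott rank must be argued directly rather than quoted from \cite[Lemma 5.3]{MonICM}, and the bound $\lambda<\omega_1^{CK}$ is exactly where $\SR(\mc{M})<\omega_1^{CK}$ is used. Working through $\Th(G)$ rather than through a single computable Scott sentence is essential, since such a sentence would require the Ulm sequence $\langle u_{\beta}\rangle_{\beta<\lambda}$ to be computable, which can fail for the non-hyperarithmetic groups the theorem is meant to cover.
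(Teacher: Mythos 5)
Your proof is correct and takes essentially the same route as the paper: push both statements across the $\Delta^{\comp}_2$ bi-interpretation of Theorem~\ref{thm:bi-int} to the abelian $p$-group component and invoke Theorem~\ref{thm:abelian}. The paper's own proof is a two-line appeal to exactly this reduction; you have merely made explicit the theory-transfer and $\omega_1^{CK}$-preservation principles, and the treatment of the pure-set component, that the paper leaves implicit.
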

\begin{proof}
Let $\mc{M}$ be a model of $T_p$. Now $\mc{M}$ is bi-interpretable, using computable infinitary formulas, with the disjoint union of an abelian $p$-group $G$ and a pure set. Thus $\mc{M}$ inherits these properties from $G$ (see Theorem \ref{thm:abelian}).
\end{proof}

\noindent Of course, there will be non-hyperarithmetic models of $T_p$ with Scott rank below $\omega_1^{CK}$.

\subsection{Borel Incompleteness}

In their influential paper \cite{FriedmanStanley89}, Friedman and Stanley introduced Borel reductions between invariant Borel classes of structures with universe $\omega$ in a countable language. Such classes are of the form $\Mod(\varphi)$, the set of models of $\varphi$ with universe $\omega$, for some $\varphi \in \mc{L}_{\omega_1 \omega}$. A Borel reduction from $\Mod(\varphi)$ to $\Mod(\psi)$ is a Borel map $\Phi \colon \Mod(\varphi) \to \Mod(\psi)$ such that
\[ \mc{M} \cong \mc{N} \Longleftrightarrow \Phi(\mc{M}) \cong \Phi(\mc{N}).\]
If such a Borel reduction exists, we say that $\Mod(\varphi)$ is Borel reducible to $\Mod(\psi)$ and write $\varphi \leq_B \psi$. If $\varphi \leq_B \psi$ and $\psi \leq_B \varphi$, then we say that $\Mod(\varphi)$ and $\Mod(\psi)$ are Borel equivalent and write $\varphi \equiv_B \psi$. Friedman and Stanley showed that graphs, fields, linear orders, trees, and groups are all Borel equivalent, and form a maximal class under Borel reduction.

If $\Mod(\varphi)$ is Borel complete, then the isomorphism relation on $\Mod(\varphi) \times \Mod(\varphi)$ is $\mathbf{\Sigma}^1_1$-complete. The converse is not true, and the most well-known example is abelian $p$-groups, whose isomorphism relation is $\mathbf{\Sigma}^1_1$-complete but not Borel complete. Until very recently, they were one of the few such examples, and there were no known examples of elementary first-order theories with similar properties. Recently, Laskowski, Rast, and Ulrich \cite{LaskowskiRastUlrich} gave an example of a first-order theory which is not Borel complete, but whose isomorphism relation is not Borel. Our theory $T_p$ is another such example.

\begin{theorem}\label{thm:borel}
The class of models of $T_p$ is Borel equivalent to abelian $p$-groups.
\end{theorem}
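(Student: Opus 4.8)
The plan is to establish the two Borel reductions separately, using the bi-interpretation machinery of Theorem~\ref{thm:bi-int} to produce Borel maps in each direction. Recall that a Borel reduction need only be a Borel-measurable map on the space of structures with universe $\omega$ that preserves and reflects isomorphism; the interpretations guaranteed by Theorem~\ref{thm:bi-int} are given by (computable) infinitary formulas, and evaluating a fixed infinitary formula on an input structure is a Borel operation, so the constructions below will automatically be Borel.

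First I would show that abelian $p$-groups are Borel reducible to the models of $T_p$. Given an abelian $p$-group $G$ with universe $\omega$, Theorem~\ref{thm:bi-int} tells us that $G$ is effectively bi-interpretable with a model $\mc{M}$ of $T_p$; in particular $G$ is infinitary interpreted in such an $\mc{M}$ and, more to the point, the construction produces from $G$ a canonical model $\Phi(G) \models T_p$. The map $G \mapsto \Phi(G)$ is obtained by applying the fixed defining formulas of the interpretation to $G$ and then fixing a Borel bijection of the interpreted domain $\Do \subseteq G^{<\omega}$ with $\omega$, which is a standard Borel operation. Since bi-interpretation preserves the isomorphism type in both directions, $G \cong G'$ if and only if $\Phi(G) \cong \Phi(G')$, so $\Phi$ is a Borel reduction.

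For the reverse direction I would reduce the models of $T_p$ to abelian $p$-groups. Given $\mc{M} \models T_p$, Theorem~\ref{thm:bi-int} gives a $\Delta^{\comp}_2$-bi-interpretation of $\mc{M}$ with the disjoint union of an abelian $p$-group $G$ and a pure set $S$. I would send $\mc{M}$ to a single abelian $p$-group $\Psi(\mc{M})$ that codes both $G$ and the cardinality of $S$; the simplest device is to take $G$ together with extra $\mathbb{Z}/p\mathbb{Z}$ direct summands, or more robustly to absorb the pure set into a choice of Ulm invariant $u_0$, so that the isomorphism type of $\Psi(\mc{M})$ determines and is determined by the pair $(G, |S|)$. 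Because $T_p$'s bi-interpretation means $\mc{M}$ is recoverable from $(G, S)$ up to isomorphism, $\mc{M} \cong \mc{M}'$ if and only if $(G,|S|) \cong (G',|S'|)$ if and only if $\Psi(\mc{M}) \cong \Psi(\mc{M}')$, giving the desired Borel reduction once one checks that extracting $G$ and $|S|$ from $\mc{M}$ via the fixed interpreting formulas is Borel.

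The main obstacle is the reverse reduction: the disjoint union of an abelian $p$-group with a pure set is not literally an abelian $p$-group, so one must genuinely encode the extra invariant $|S| \in \{0,1,2,\ldots,\aleph_0\}$ into the $p$-group without colliding with the existing Ulm invariants, and then verify that this encoding is both injective on isomorphism types and Borel. I would handle this by reserving a fixed Ulm invariant (say the length-zero layer, adjusting $u_0(G)$) to record $|S|$, checking that this adjustment is reversible and definable, and confirming that the $\Delta^{\comp}_2$ (rather than fully effective) nature of one direction of the interpretation does not interfere with Borelness, since any computable infinitary formula is still evaluated by a Borel operation. Assembling these two reductions yields $\varphi_{T_p} \equiv_B \psi_{p\text{-groups}}$, which is the claimed Borel equivalence.
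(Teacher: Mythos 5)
Your overall architecture matches the paper's: the forward reduction is the canonical construction $G \mapsto \mathfrak{M}(G)$ (which is evidently Borel and reflects isomorphism), and the real work is the reverse direction, where the pair $(\mathfrak{G}(\mc{M}), \#\mc{M})$ must be coded by a single abelian $p$-group. You correctly identify the obstacle --- the cardinality of the pure set must be recorded without colliding with the Ulm invariants of $G$ --- but you do not resolve it, and resolving it is the substantive content of the proof. Your first device, adjoining $(\mathbb{Z}/p\mathbb{Z})^{|S|}$ as direct summands to $G$, fails outright: $G \oplus (\mathbb{Z}/p\mathbb{Z})^m$ has $0$th Ulm invariant $u_0(G) + m$, so for instance the pairs $(\mathbb{Z}/p\mathbb{Z},\,0)$ and $(0,\,1)$ are sent to isomorphic groups. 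Your second device, ``reserving $u_0$ to record $|S|$,'' is the right idea but is not yet a construction: one cannot simply declare an Ulm invariant; one must exhibit a group whose Ulm sequence is that of $G$ shifted by one position with $m$ inserted in front, and then verify that the pair $(G,m)$ is recoverable from its isomorphism type.

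The paper supplies exactly this missing construction. Take $\mc{B} \subseteq G$ a set of representatives of a basis of the $\mathbb{Z}/p\mathbb{Z}$-vector space $G/pG$ (the lexicographically first such set, for Borelness), let $G^* = \la G,\, a_b : b \in \mc{B} \mid p a_b = b \ra$, and set $\mathfrak{H}(G,m) = G^* \oplus (\mathbb{Z}/p\mathbb{Z})^m$. The point is that $pG^* = G$ while $G^*[p] = G[p]$, so $p\mathfrak{H}(G,m) \cong G$ (recovering $G$) and $\mathfrak{H}(G,m)[p]\,/\,(p\mathfrak{H}(G,m))[p] \cong (\mathbb{Z}/p\mathbb{Z})^m$ (recovering $m$ as the $0$th Ulm invariant). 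Two further checks are needed that your sketch omits: that the isomorphism type of $G^*$ does not depend on the choice of $\mc{B}$ (proved via the uniqueness of representations $h + \sum_{b \in \mc{B}} x_b a_b$ with $h \in G$ and $x_b < p$), and that the resulting map $\mc{M} \mapsto \mathfrak{H}(\mathfrak{G}(\mc{M}), \#\mc{M})$ is Borel. Until a construction with these properties is written down and verified, the reverse reduction --- and hence the Borel equivalence --- is not established.
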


Because abelian $p$-groups are not Borel complete, but their isomorphism relation is $\mathbf{\Sigma^1_1}$-complete, we get:

\begin{corollary}
The class of models of $T_p$ is not Borel complete but the isomorphism relation is $\mathbf{\Sigma^1_1}$-complete.
\end{corollary}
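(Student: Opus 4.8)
The plan is to deduce everything from Theorem \ref{thm:borel}, since both ``Borel completeness'' and ``$\mathbf{\Sigma}^1_1$-completeness of the isomorphism relation'' are invariants of the Borel equivalence $\equiv_B$. Write $\cong_{T_p}$ for the isomorphism relation on the models of $T_p$ with universe $\omega$, and $\cong_p$ for the isomorphism relation on abelian $p$-groups. By Theorem \ref{thm:borel} we have Borel reductions in both directions between these two classes, and each such reduction $\Phi$ induces a Borel reduction of the corresponding isomorphism equivalence relations via $(\mc{M},\mc{N}) \mapsto (\Phi(\mc{M}),\Phi(\mc{N}))$. The two halves of the corollary use opposite directions of this equivalence, so I would keep the directions separate.

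First I would handle the failure of Borel completeness by contradiction. If the class of models of $T_p$ were Borel complete, then graphs would Borel reduce to it; composing with the reduction of the models of $T_p$ into abelian $p$-groups furnished by Theorem \ref{thm:borel} would give a Borel reduction of graphs into abelian $p$-groups, making the latter Borel complete. This contradicts the known fact that abelian $p$-groups are not Borel complete, and so the models of $T_p$ are not Borel complete either.

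For the completeness of the isomorphism relation, I would argue using the other direction. Since isomorphism of countable structures in a fixed countable language is always analytic, $\cong_{T_p}$ is $\mathbf{\Sigma}^1_1$. For hardness, recall that $\cong_p$ is $\mathbf{\Sigma}^1_1$-complete, so every $\mathbf{\Sigma}^1_1$ set Borel reduces to $\cong_p$; composing with the reduction of abelian $p$-groups into the models of $T_p$ provided by Theorem \ref{thm:borel} shows that every $\mathbf{\Sigma}^1_1$ set Borel reduces to $\cong_{T_p}$. Hence $\cong_{T_p}$ is $\mathbf{\Sigma}^1_1$-complete.

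I expect no serious obstacle here: all of the substantive work is contained in Theorem \ref{thm:borel}, and the corollary is purely formal. The only point requiring a little care is bookkeeping of directions—the reduction of $T_p$ into abelian $p$-groups is what transfers the failure of Borel completeness, while the reduction of abelian $p$-groups into $T_p$ is what transfers $\mathbf{\Sigma}^1_1$-hardness—together with the elementary observation that $\cong_{T_p}$ is itself $\mathbf{\Sigma}^1_1$, which upgrades hardness to completeness.
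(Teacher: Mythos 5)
Your proposal is correct and takes essentially the same route as the paper, which deduces the corollary immediately from Theorem \ref{thm:borel} together with the known facts that abelian $p$-groups are not Borel complete but have $\mathbf{\Sigma}^1_1$-complete isomorphism relation. Your careful bookkeeping of which direction of the Borel equivalence transfers which property is a sound elaboration of what the paper leaves implicit.
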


Theorem \ref{thm:borel} is a specific instance of the following general question asked by Friedman:

\begin{question}
Is it true that for every $\mc{L}_{\omega_1 \omega}$ sentence there is a Borel equivalent first-order theory?
\end{question}

\section{\texorpdfstring{Abelian $p$-groups are of Ulm type}{Abelian p-groups are of Ulm type}}\label{sec:abelian}

In this section we will describe a proof of the following well-known theorem, which shows that abelian $p$-groups are of Ulm type.

\begin{theorem}\label{thm:abelian}
Let $G$ be an abelian $p$-group with $\omega_1^{CK} = \omega_1^G$. Then:
\begin{enumerate}
	\item $G$ is the only countable model of $\Th(G)$ with $\omega_1^G = \omega_1^{CK}$, and
	\item if $\SR(G) < \omega_1^{CK} = \omega_1^G$, then $\Th(G)$ is $\aleph_0$-categorical.
\end{enumerate}
\end{theorem}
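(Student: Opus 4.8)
The plan is to prove both parts of Theorem \ref{thm:abelian} using the classification of abelian $p$-groups by their Ulm invariants together with an analysis of how Ulm invariants are captured by the computable infinitary theory. The key observation is that for an abelian $p$-group $G$, each subgroup $p^\alpha G$ and the associated quotient $(p^\alpha G)[p]/(p^{\alpha+1}G)[p]$ are definable by computable infinitary formulas whenever $\alpha < \omega_1^{CK}$; indeed the predicate ``$x \in p^\alpha G$'' is naturally $\Pi^{\comp}_{2\alpha}$-definable (or thereabouts) by transfinite recursion mirroring the definition of the filtration. Because $\omega_1^G = \omega_1^{CK}$, the length of $G$ is at most $\omega_1^{CK}$, so all the Ulm invariants $u_\alpha(G)$ for $\alpha < \omega_1^{CK}$ are expressible, and whether $u_\alpha(G) \geq n$ or $u_\alpha(G)$ is infinite is recorded by computable infinitary sentences. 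Similarly the isomorphism type of the $p$-divisible part $p^\infty G$—a direct sum of copies of $\mathbb{Z}(p^\infty)$, determined by its (possibly infinite) rank—is captured by computable infinitary sentences counting independent divisible elements.

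First I would verify the definability claims: for each $\alpha < \omega_1^{CK}$ produce explicit computable $\Pi^{\comp}$ formulas $\varphi_\alpha(x)$ defining $p^\alpha G$, and from these produce, for each $n$, a computable infinitary sentence asserting $\dim\big((p^\alpha G)[p]/(p^{\alpha+1}G)[p]\big) \geq n$, and a sentence asserting it is infinite; likewise sentences asserting that the $p$-divisible part has rank $\geq n$ or is infinite. All of these sentences lie in $\Th(G)$ exactly according to the true values of the invariants. Next, suppose $H$ is another countable abelian $p$-group with $\omega_1^H = \omega_1^{CK}$ and $\Th(H) = \Th(G)$. Since $\omega_1^H = \omega_1^{CK}$, the same formulas $\varphi_\alpha$ define $p^\alpha H$ for $\alpha < \omega_1^{CK}$, and the length of $H$ is at most $\omega_1^{CK}$. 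Agreement of the computable infinitary theories then forces $u_\alpha(G) = u_\alpha(H)$ for every $\alpha < \omega_1^{CK}$ and forces the $p$-divisible parts to have equal rank, hence to be isomorphic. By Ulm's Theorem, $G \cong H$, which is part (1).

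For part (2), assume $\SR(G) < \omega_1^{CK}$. Then by the characterization of computable Scott rank recalled in the excerpt, the length of $G$ is some $\beta < \omega_1^{CK}$ and only finitely much information is needed to pin down each orbit, so in fact $G$ has a computable Scott sentence (as noted for Nadel's theorem in the case $\SR(G) < \omega_1^{CK}$). I would argue that under $\SR(G) < \omega_1^{CK}$ the full list of Ulm invariants $(u_\alpha(G))_{\alpha < \beta}$ together with the rank of the $p$-divisible part is uniformly coded by a single computable infinitary sentence $\sigma \in \Th(G)$, and that any countable $H \models \sigma$ must realize exactly these invariants and hence, by Ulm's Theorem, be isomorphic to $G$—this time with no hypothesis on $\omega_1^H$, because the bounded length $\beta < \omega_1^{CK}$ means all relevant filtration layers are captured outright. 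This gives $\aleph_0$-categoricity of $\Th(G)$.

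The main obstacle I anticipate is the careful bookkeeping at limit ordinals in the definability of $p^\alpha G$: the intersection $p^\beta G = \bigcap_{\alpha < \beta} p^\alpha G$ requires a computable infinitary conjunction over $\alpha < \beta$, so one must check that the complexity of $\varphi_\alpha$ grows in a controlled way and that the index set of the conjuncts remains computable, which relies essentially on $\alpha < \omega_1^{CK}$ and on choosing a computable presentation of $\beta$. A secondary subtlety is ensuring that the hypothesis $\omega_1^H = \omega_1^{CK}$ in part (1) is genuinely used to guarantee that $H$'s length does not exceed $\omega_1^{CK}$ (so that no Ulm invariants are ``hidden'' above $\omega_1^{CK}$), whereas in part (2) the bounded Scott rank lets us dispense with any assumption on $H$; distinguishing these two regimes cleanly is where the argument must be most precise.
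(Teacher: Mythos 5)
Your overall strategy---express the Ulm invariants by computable infinitary sentences, use $\omega_1^H=\omega_1^{CK}$ to bound the length of $H$, and invoke Ulm's Theorem---is the same as the paper's, and your treatment of part (2) is essentially correct (with one caveat below). But there is a genuine gap in part (1), precisely in the case the hypothesis $\omega_1^G=\omega_1^{CK}$ is there to handle: when $G$ has length exactly $\omega_1^{CK}$. In that case the $p$-divisible part $p^\infty G=\bigcap_{\alpha<\omega_1^{CK}}p^\alpha G$ is \emph{not} definable by any computable infinitary formula (the conjunction would have to run over all $\alpha<\omega_1^{CK}$), so there are no computable sentences ``asserting that the $p$-divisible part has rank $\geq n$ or is infinite,'' and your claim that agreement of the theories ``forces the $p$-divisible parts to have equal rank'' is unjustified. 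Ulm's Theorem needs both equal Ulm invariants \emph{and} isomorphic divisible parts, so the argument does not close. The paper fills this hole with a structural fact (Ash--Knight, Theorem 8.17): if an abelian $p$-group has length $\omega_1^G$, then its $p$-divisible part automatically has \emph{infinite} rank (and $\SR(G)=\omega_1^G+1$). Hence when $G$ and $H$ both have length $\omega_1^{CK}$, their divisible parts are both isomorphic to $\mathbb{Z}(p^\infty)^{(\omega)}$ for free, and nothing about them needs to be read off from $\Th(G)$. The same lemma is what legitimizes your step in part (2) that $\SR(G)<\omega_1^{CK}$ implies the length is below $\omega_1^{CK}$; this is a fact specific to abelian $p$-groups, not a consequence of the general characterization of Scott rank.

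A secondary, fixable overclaim in part (2): you propose to code the entire list $(u_\alpha(G))_{\alpha<\beta}$ into a \emph{single} computable sentence $\sigma$. Since $G$ is allowed to be non-hyperarithmetic (only $\omega_1^G=\omega_1^{CK}$ is assumed), the function $\alpha\mapsto u_\alpha(G)$ need not be hyperarithmetic, so the conjunction $\bigwedge_{\alpha<\beta}\varphi_{\alpha,u_\alpha(G)}$ need not be a computable formula. The paper avoids this by working with the whole theory: $\Th(G)$ contains each sentence $\varphi_{\alpha,u_\alpha(G)}$ individually, together with $(\forall x)[\psi_\lambda(x)\leftrightarrow\psi_{\lambda+1}(x)]$ bounding the length and the sentences pinning down the rank of $p^\lambda G$, and that set of sentences suffices for $\aleph_0$-categoricity.
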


The proof of Theorem \ref{thm:abelian} consists essentially of expressing the Ulm invariants via computable infinitary formulas.

\begin{definition}
Let $G$ be an abelian $p$-group. For each ordinal $\alpha < \omega_1^{CK}$, there is a computable infinitary sentence $\psi_{\alpha}(x)$ which defines $p^\alpha G$ inside of $G$:
\begin{itemize}
	\item $\psi_0(x)$ is just $x = x$; 
	\item $\psi_{\alpha + 1}(x)$ is $(\exists y)[\psi_\alpha(y) \wedge py = x]$;
	\item $\psi_{\beta}(x)$ is $\bigdoublewedge_{\alpha < \beta} \psi_{\alpha}(x)$ for limit ordinals $\beta$.
\end{itemize}
\end{definition}

\begin{definition}
For each ordinal $\alpha < \omega_1^{CK}$ and $n \in \omega \cup \{\omega\}$, there is a computable infinitary sentence $\varphi_{\alpha,n}$ such that, for $G$ an abelian $p$-group,
\[ G \models \varphi_{\alpha,n} \Leftrightarrow u_\alpha(G) = n. \] 
For $n \in \omega$, define $\varphi_{\alpha,\geq n}$ to say that there are $x_1,\ldots,x_n$ such that:
\begin{itemize}
	\item $\psi_\alpha(x_1) \wedge \cdots \wedge \psi_\alpha(x_n)$,
	\item $px_1 = \cdots = px_n = 0$, and
	\item for all $c_1,\ldots,c_n \in \mathbb{Z} / p\mathbb{Z}$ not all zero, $\neg \psi_{\alpha + 1}(c_1 x_1 + \cdots + c_n x_n)$.
\end{itemize}
Then for $n \in \omega$, $\varphi_{\alpha,n}$ is $\varphi_{\alpha,\geq n} \wedge \neg \varphi_{\alpha,\geq n+1}$, and $\varphi_{\alpha,\omega}$ is $\bigdoublewedge_{n \in \omega} \varphi_{\alpha,\geq n}$.
\end{definition}

\begin{lemma}[Theorem 8.17 of \cite{AshKnight00}]
Let $G$ be an abelian $p$-group. Then:
\begin{enumerate}
	\item the length of $G$ is at most $\omega_1^{G}$, and
	\item if $G$ has length $\omega_1^G$ then $G$ is not reduced (in fact, its $p$-divisible part has infinite rank) and $\SR(G) = \omega_1^{G} + 1$.
\end{enumerate}
\end{lemma}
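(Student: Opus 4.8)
The plan is to base the whole argument on the $G$-computable relation $R$ on $G$ defined by $y \mathrel{R} x \iff py = x$ (``$y$ is a $p$th root of $x$''), and to show that its $R$-rank recovers the $p$-height while its ill-founded part is exactly the $p$-divisible part. First I would identify the ill-founded part of $R$ as $p^\infty G$: if $x \in p^\infty G$, then since $p^\infty G$ is divisible we may extract roots indefinitely inside $p^\infty G$, giving an infinite $R$-descending chain; conversely an infinite chain $x = py_1,\ y_1 = py_2,\ \ldots$ is a coherent system of roots, and in a $p$-group a nonzero such system has orders tending to infinity and generates a copy of $\mathbb{Z}(p^\infty)$, so $x$ lies in a divisible subgroup and hence in $p^\infty G$. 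Thus the well-founded part of $R$ is $G \setminus p^\infty G$, and an induction shows that for $x \notin p^\infty G$ the $R$-rank of $x$ equals its $p$-height $h(x)$ (the $\alpha$ with $x \in p^\alpha G \setminus p^{\alpha+1}G$); the only delicate point is at limit heights $\alpha$, where for each $\beta < \alpha$ the membership $x \in p^{\beta+1}G$ produces a root in $p^\beta G$, so the roots have $R$-rank cofinal in $\alpha$. Consequently the length of $G$, being $\sup\{h(x)+1 : x \notin p^\infty G\}$, is exactly the ordinal rank of the well-founded part of $R$.

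Part (1) is then immediate: the well-founded part of a $G$-computable relation has rank at most $\omega_1^G$, so the length of $G$ is at most $\omega_1^G$. Moreover, if $G$ is reduced then $p^\infty G = 0$, so $R$ is well-founded on $G \setminus \{0\}$, and since the rank of a well-founded $G$-computable relation is strictly below $\omega_1^G$ we get that the length of $G$ is strictly below $\omega_1^G$. Taking the contrapositive gives the first assertion of part (2): if $G$ has length $\omega_1^G$, then $G$ is not reduced.

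For the infinite-rank claim I would argue by contradiction, using that divisible subgroups are direct summands: write $G = D \oplus K$ with $D = p^\infty G$ and $K$ reduced, so that $p^\alpha G = D \oplus p^\alpha K$ and hence the length of $G$ equals the length of $K$. If $D$ had finite rank then $D \cong \mathbb{Z}(p^\infty)^r$, and I would locate $D$, and thereby a copy of $K$, hyperarithmetically in $G$, so that $\omega_1^K \le \omega_1^G$; by the reduced case the length of $K$ is strictly below $\omega_1^K \le \omega_1^G$, contradicting that the length of $G$ is $\omega_1^G$. Hence $D$ has infinite rank. The main obstacle is exactly this step: verifying that a finite-rank $D$ can be located hyperarithmetically in $G$ (so that passing to $K$ does not raise $\omega_1$). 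This is where infinite rank is genuinely used, since in that case $D$ is only relatively intrinsically $\Sigma^\comp_1$ and $K$ may be strictly more complex than $G$.

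Finally, for $\SR(G) = \omega_1^G + 1$, the upper bound is Nadel's inequality $\SR(G) \le \omega_1^G + 1$. For the lower bound I would use characterization (3): it suffices to exhibit a tuple whose orbit is defined by no computable infinitary formula. I would take a nonzero $x \in p^\infty G$ (available since $G$ is not reduced). Any computable infinitary formula has rank some $\gamma < \omega_1^G$ and so, by a back-and-forth analysis of the defining formulas $\psi_\alpha$, cannot detect $p$-height beyond $\gamma$; since $G$ has length $\omega_1^G$ there is an element whose height lies in $(\gamma,\omega_1^G)$, which such a formula cannot separate from $x \in p^\infty G$. Using that $D$ has infinite rank to supply the homogeneity of the orbit of $x$, no computable infinitary formula isolates this orbit, and therefore $\SR(G) = \omega_1^G + 1$.
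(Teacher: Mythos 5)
First, note that the paper does not actually prove this lemma --- it is quoted from Ash--Knight (Theorem 8.17) --- so your argument has to stand entirely on its own. The first half of it does: identifying the ill-founded part of the root relation $R$ with $p^\infty G$ and the $R$-rank with the height function is correct, and it cleanly gives part (1) and the implication ``length $\omega_1^G$ implies not reduced'' via the standard bounds on the ranks of well-founded parts of $G$-computable relations.

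The gap you flag in the infinite-rank step is real, and the repair you propose does not go through as stated: the natural way to show that a finite-rank $D=p^\infty G$ is hyperarithmetic in $G$ is to show that it equals $p^\beta G$ for some $\beta<\omega_1^G$, i.e.\ that the length is below $\omega_1^G$ --- which is exactly the conclusion you are trying to reach, so the step is essentially circular. ($D$ is $\Sigma^1_1(G)$ as the ill-founded part of $R$, but it need not be $\Pi^1_1(G)$, and passing to the quotient $G/D$ is a detour.) The argument that works is different: if $D$ had finite rank then $D[p]$ would be finite, so $G[p]\setminus D[p]$ would be $G$-computable from finitely many parameters and contained in the well-founded part of $R$; by $\Sigma^1_1(G)$-bounding its heights are bounded by some $\beta<\omega_1^G$, whence $(p^\gamma G)[p]=D[p]$ and $u_\gamma(G)=0$ for all $\gamma\ge\beta$; a short induction on the order of elements then shows that $p^\beta G$ is divisible, so the length is at most $\beta<\omega_1^G$, a contradiction. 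A second, smaller gap is in the Scott rank computation: the claim that a $\Sigma_\gamma$ formula cannot separate a nonzero $x\in D[p]$ from elements of order $p$ and sufficiently large ordinal height is precisely Barker's computation of the back-and-forth relations on abelian $p$-groups and needs to be proved or cited, and the existence of such elements outside $D[p]$ at every level below $\omega_1^G$ is again the observation that $(p^\beta G)[p]\ne D[p]$ for all $\beta<\omega_1^G$ when the length is $\omega_1^G$.
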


\noindent We are now ready to give the proof of Theorem \ref{thm:abelian}.

\begin{proof}[Proof of Theorem \ref{thm:abelian}]
Since $\omega_1^{CK} = \omega_1^G$, $G$ has length at most $\omega_1^{CK}$. Note that $\Th(G)$ contains the sentences $\varphi_{\alpha,u_\alpha(G)}$ for $\alpha < \omega_1^{CK}$. Thus any model of $\Th(G)$ has the same Ulm invariants as $G$, for ordinals below $\omega_1^{CK}$.

If $\SR(G) < \omega_1^{CK}$, let $\lambda$ be the length of $G$. Then $\Th(G)$ includes the computable formula $(\forall x)[\psi_{\lambda}(x) \leftrightarrow \psi_{\lambda + 1}(x)]$, so that any countable model of $\Th(G)$ has length at most $\lambda$. Note that in such a model, $\psi_{\lambda}$ defines the $p$-divisible part. Let $n \in \omega \cup \{\omega\}$ be such that $p^\infty G$ is isomorphic to $\mathbb{Z}(p^\infty)^n$. Then, if $n \in \omega$, $\Th(G)$ contains the formula which says that there are $x_1,\ldots,x_n$ such that
\begin{itemize}
	\item $\psi_\lambda(x_1) \wedge \cdots \wedge \psi_\lambda(x_n)$,
	\item for all $c_1,\ldots,c_n < p$ not all zero and $k_1,\ldots,k_n \in \omega$,
	\[ \frac{c_1}{p^{k_1}} x_1 + \cdots +  \frac{c_n}{p^{k_n}} x_n \neq 0,\]
	\item for all $y$ with $\psi_{\lambda}(y)$, there are $c_1,\ldots,c_n < p$ and $k_1,\ldots,k_n \in \omega$ such that
	\[ y = \frac{c_1}{p^{k_1}} x_1 + \cdots +  \frac{c_n}{p^{k_n}} x_n.\]
\end{itemize}
If $n = \omega$, then $\Th(G)$ contains the formula which says that for each $m \in \omega$, there are $x_1,\ldots,x_m$ such that
\begin{itemize}
	\item $\psi_\lambda(x_1) \wedge \cdots \wedge \psi_\lambda(x_m)$, and
	\item for all $c_1,\ldots,c_m < p$ not all zero and $k_1,\ldots,k_m \in \omega$,
	\[ \frac{c_1}{p^{k_1}} x_1 + \cdots +  \frac{c_m}{p^{k_m}} x_m \neq 0.\]
\end{itemize}
Any countable model of $\Th(G)$ has $p$-divisible part isomorphic to $\mathbb{Z}(p^\infty)^n$. So any countable model of $\Th(G)$ has the same Ulm invariants and $p$-divisible part as $G$, and hence is isomorphic to $\Th(G)$. Hence $\Th(G)$ is $\aleph_0$-categorical. This gives (2), and (1) for the case where $\SR(G) < \omega_1^{CK}$.

If $\SR(G) = \omega_1^{CK} + 1$, let $H$ be any other countable model of $\Th(G)$ with $\omega_1^H = \omega_1^G = \omega_1^{CK}$. Thus $G$ and $H$ both have length $\omega_1^{CK}$ and their $p$-divisible parts have infinite rank. As remarked before, they have the same Ulm invariants, and so they must be isomorphic. This completes the proof of (1).
\end{proof}

\section{\texorpdfstring{The Theory $T_p$}{The Theory Tp}}\label{sec:m}

Fix a prime $p$. The language $\mc{L}_p$ of $T_p$ will consist of a constant $0$, unary relations $R_n$ for $n \in \omega$, and ternary relations $P_{\ell,m}^n$ for $\ell,m \in \omega$ and $n \leq \max(\ell,m)$. The following transformation of an abelian $p$-group into an $\mc{L}_p$-structure will illustrate the intended meaning of the symbols.

\begin{definition}
Let $G$ be an abelian $p$-group. Define $\mathfrak{M}(G)$ to be $\mc{L}_p$-structure obtained as follows, with the same domain as $G$, and the symbols of $\mc{L}_p$ interpreted as follows:
\begin{itemize}
	\item Set $0^{\mathfrak{M}(G)}$ to be the identity element of $G$.
	\item For each $n$, let $R_n^{\mathfrak{M}(G)}$ be the elements which are torsion of order $p^n$.
	\item For each $\ell,m \in \omega$ and $n \leq \max(\ell,m)$, and $x,y,z \in G$, set $P_{\ell,m}^{n,\mathfrak{M}(G)}(x,y,z)$ if and only if $x + y = z$, $x \in R_\ell^{\mathfrak{M}(G)}$, $y \in R_m^{\mathfrak{M}(G)}$, and $z \in R_n^{\mathfrak{M}(G)}$.
\end{itemize}
\end{definition}

\noindent One should think of such $\mc{L}_p$-structures as the canonical models of $T_p$. The theory $T_p$ will consist of following axiom schemata:
\begin{enumerate}[label=(A\arabic*)]
\item For all $\ell,m,n \in \omega$:
\[ (\forall x \forall y \forall z)\left[P_{\ell,m}^n(x,y,z) \rightarrow \left(R_\ell(x) \wedge R_m(x) \wedge R_n(z) \right) \right]. \]
\item \textit{($R_n$ contains the elements which are torsion of order $p^n$.)}
\[ (\forall x)[ R_0(x) \leftrightarrow x = 0].\]
and, for all $n \geq 1$:
\[(\forall x)\left[x \in R_n \leftrightarrow (\exists x_2 \cdots \exists x_{p-1})\left[P_{n,n}^{n}(x,x,x_2) \wedge P_{n,n}^{n}(x,x_2,x_3) \wedge \cdots \wedge P_{n,n}^{n-1}(x,x_{p-1},x_p) \right]\right].\]
\item \textit{($P$ defines a partial function.)} For all $\ell,m,n,n' \in \omega$:
\[ (\forall x \forall y \forall z \forall z')\left[ \big(P_{\ell,m}^n(x,y,z) \wedge P_{\ell,m}^{n'}(x,y,z') \big) \rightarrow z = z' \right]. \]
\item \textit{($P$ is total.)} For all $\ell,m \in \omega$:
\[ (\forall x \forall y)\left[ \big( R_\ell(x) \wedge R_m(y) \big) \rightarrow \bigvee_{n \leq \max(\ell,m)} (\exists z) P_{\ell,m}^n(x,y,z) \right]. \]
\item \textit{(Identity.)} For all $\ell \in \omega$:
\[ (\forall x)[R_\ell(x) \rightarrow \big[ P_{0,\ell}^{\ell}(0,x,x) \wedge P_{\ell,0}^\ell(x,0,x)]\big]. \]
\item \textit{(Inverses.)} For all $\ell \in \omega$:
\[ (\forall x)(\exists y)\big[R_\ell(x) \rightarrow [P_{\ell,\ell}^0(x,y,0) \wedge P_{\ell,\ell}^0(y,x,0)]\big].  \]
\item \textit{(Associativity.)} For all $\ell,m,n \in \omega$:
\begin{equation*}\begin{split} &(\forall x \forall y \forall z)\bigg[ \Big[ R_\ell(x) \wedge R_m(y) \wedge R_n(z) \Big] \longrightarrow \\
&\bigvee_{\substack{r \leq \max(\ell,m) \\ s \leq \max(m,n) \\ t \leq \max(r,n),\max(\ell,s)}} (\exists u \exists v \exists w) \Big[ P_{\ell,m}^r(x,y,u) \wedge P_{r,n}^{t}(u,z,w) \wedge P_{m,n}^{s}(y,z,v) \wedge P_{\ell,s}^{t} (x,v,w) \Big] \bigg].
\end{split}\end{equation*}
\item \textit{(Abelian.)} For all $\ell,m \in \omega$ and $n \leq \max(\ell,m)$:
\[ (\forall x \forall y \forall z)\big[[R_\ell(x) \wedge R_m(y) \wedge R_n(z) \wedge P_{\ell,m}^n(x,y,z)] \rightarrow P_{m,\ell}^n(y,x,z)\big]. \]
\end{enumerate}

We must now check that the definition of $T_p$ works as desired, that is, that if $G$ is an abelian $p$-group, then $\mathfrak{M}(G)$ is a model of $T_p$.

\begin{lemma}
If $G$ is an abelian $p$-group, then $\mathfrak{M}(G) \models T_p$.
\end{lemma}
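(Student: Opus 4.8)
The plan is to verify each of the eight axiom schemata (A1)--(A8) in turn, showing that each holds in $\mathfrak{M}(G)$ because it encodes a corresponding true statement about the abelian $p$-group $G$. The key observation underlying everything is that $P_{\ell,m}^n(x,y,z)$ holds in $\mathfrak{M}(G)$ exactly when $x+y=z$ in $G$ \emph{together with} the type constraints $x \in R_\ell$, $y \in R_m$, $z \in R_n$; so each axiom simply restricts a group-theoretic fact to elements of the stated orders, and one must only check that the order bookkeeping (the side conditions like $n \leq \max(\ell,m)$) is consistent with the arithmetic in $G$.

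First I would record the arithmetic fact that controls all the order constraints: if $x$ has order $p^\ell$ and $y$ has order $p^m$, then $x+y$ has order $p^n$ for some $n \leq \max(\ell,m)$, since $p^{\max(\ell,m)}(x+y) = 0$. This justifies the disjunction bounds in (A1), (A4), (A7), and (A8). With this in hand, (A1) is immediate from the definition of $P$; (A3) (partial function) follows because $z = x+y$ is uniquely determined in $G$, so if $P_{\ell,m}^n(x,y,z)$ and $P_{\ell,m}^{n'}(x,y,z')$ both hold then $z = x+y = z'$; (A4) (totality) holds because given $x \in R_\ell$ and $y \in R_m$ the sum $z = x+y$ exists and lies in some $R_n$ with $n \leq \max(\ell,m)$; (A5) (identity) and (A6) (inverses) are the group identity and inverse laws, using that $0$ has order $p^0$ and that $-x$ has the same order as $x$ so $P_{\ell,\ell}^0$ is the correct relation; and (A8) (abelian) is commutativity of $+$ together with the symmetry of the order constraints under swapping $x,y$.

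The two axioms requiring slightly more care are (A2) and (A7). For (A2) I would check that $R_0 = \{0\}$ matches the definition (elements of order $p^0 = 1$), and that for $n \geq 1$ the existential formula with witnesses $x_2,\ldots,x_p$ correctly expresses ``$x$ has order $p^n$'': the chain $P_{n,n}^n(x,x,x_2), P_{n,n}^n(x,x_2,x_3),\ldots$ forces $x_k = kx$ while each remains in $R_n$, and the final relation $P_{n,n}^{n-1}(x,x_{p-1},x_p)$ records that $px = x_p$ drops to order $p^{n-1}$, which holds precisely when $x$ itself has order exactly $p^n$ rather than smaller. For (A7) (associativity) I would verify that given $x,y,z$ of orders $p^\ell, p^m, p^n$, setting $u = x+y$, $v = y+z$, $w = x+y+z$ gives witnesses satisfying all four conjuncts, and that the orders $r,s,t$ of $u,v,w$ respect the displayed bounds --- in particular $t \leq \max(r,n)$ and $t \leq \max(\ell,s)$ both hold since $w$ equals both $u+z$ and $x+v$.

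I expect the main obstacle to be the bookkeeping in (A7): one must confirm that the single element $w = x+y+z$ admits an order exponent $t$ simultaneously satisfying $t \leq \max(r,n)$ and $t \leq \max(\ell,s)$, where $r,s$ are the orders of the two intermediate sums. This is not automatic from a naive bound and requires noting that $w$ arises as a sum in two different ways, so its order is bounded by each of the two relevant maxima; the existence of a common admissible $t$ is exactly what makes the disjunction in (A7) nonempty for the correct witnesses. Once this consistency is checked, the verification is complete and $\mathfrak{M}(G) \models T_p$.
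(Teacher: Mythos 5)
Your proof is correct and follows essentially the same route as the paper's: a direct case-by-case verification of (A1)--(A8), using the fact that the order of a sum is bounded by the maximum of the orders of the summands, that $kx$ has the same order as $x$ for $1 \le k \le p-1$ while $px$ drops the order by one (for (A2)), and that $w = x+y+z$ can be written as both $u+z$ and $x+v$ to satisfy the two bounds on $t$ in (A7). No further changes are needed.
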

\begin{proof}
We must check that each instance of the axiom schemata of $T_p$ holds in $\mathfrak{M}(G)$.
\begin{enumerate}[label=(A\arabic*)]
	\item Suppose that $x$, $y$, and $z$ are elements of $G$ with $P_{m,\ell}^{n,\mathfrak{M}(G)}(x,y,z)$. Then, by definition, $x + y = z$, $x \in R_\ell^\mathfrak{M}(G)$, $y \in R_m^{\mathfrak{M}(G)}$, and $z \in R_n^{\mathfrak{M}(G)}$.
	\item $R_0^{\mathfrak{M}(G)}$ contains the elements of $G$ which are torsion of order $p^0 = 1$, so $R_0$ contains just the identity. For each $n > 0$, $R_n^{\mathfrak{M}(G)}$ contains the elements of order $p^n$. An element $x$ has order $p^n$ if and only if $px$ has order $p^{n-1}$. It remains only to note that if $x$ has order $p^n$, then $x,2x,3x,\ldots,(p-1)x$ all have order $p^n$ as well. The existential quantifier is witnessed by $x_2 = 2x$, $x_3 = 3x$, and so on.
	\item If, for some $x$, $y$, $z$, and $z'$, $P^{n,\mathfrak{M}(G)}_{\ell,m}(x,y,z)$ and $P_{\ell,m}^{n',\mathfrak{M}(G)}(x,y,z')$, then $x + y = z$ and $x + y = z'$, so that $z = z'$.
	\item Given $x$ and $y$ in $G$ which are of order $p^m$ and $p^\ell$ respectively, $x + y$ is of order $p^n$ for some $n \leq \max(m,\ell)$, and so we have $P_{m,\ell}^{n,\mathfrak{M}(G)}(x,y,x+y)$.
	\item If $x \in G$ is of order $p^\ell$, then $x + 0 = 0 + x = x$ and so we have $P_{\ell,0}^{\ell,\mathfrak{M}(G)}(x,0,x)$.
	\item If $x \in G$ is of order $p^\ell$, then $-x$ is also of order $p^\ell$, and $x + (-x) = 0 = (-x) + x$. So we have $P_{\ell,\ell}^{0,\mathfrak{M}(G)}(x,-x,0)$.
	\item Given $x,y,z \in G$ of order $p^\ell$, $p^m$, and $p^n$ respectively, there are $r \leq \max(\ell,m)$ and $s \leq \max(m,n)$ such that $x + y$ and $y + z$ are of order $p^r$ and $p^s$ respectively. Then there is $t$ such that $x+y+z$ is of order $p^t$; $t \leq \max(r,n)$ and $t \leq \max(\ell,s)$.
	\item Given $x,y,z \in G$ of order $p^\ell$, $p^m$, and $p^n$ respectively, $n \leq \max(\ell,m)$, and with $x + y = z$, we have $y + x = z$ as $G$ is abelian.
\end{enumerate}
Thus we have shown that $\mathfrak{M}(G)$ is a model of $T_p$.
\end{proof}

Note that $G$ and $\mathfrak{M}(G)$ are effectively bi-interpretable, proving one half of Theorem \ref{thm:bi-int}.

\section{\texorpdfstring{From a model of $T_p$ to an abelian $p$-group}{From a model of Tp to an abelian p-group}}\label{sec:g}

Given an abelian $p$-group $G$, we have already described how to turn $G$ into a model of $T_p$. In this section we will do the reverse by turning a model of $T_p$ into an abelian $p$-group.

\begin{definition}
Let $\mc{M}$ be a model of $T_p$. Define $\mathfrak{G}(\mc{M})$ to be the group obtained as follows.
\begin{itemize}
	\item The domain of $\mathfrak{G}(\mc{M})$ will be the subset of the domain of $\mc{M}$ given by $\bigcup_{n \in \omega} R_n^{\mc{M}}$.
	\item The identity element of $\mathfrak{G}(\mc{M})$ will be $0^{\mc{M}}$.
	\item We will have $x + y = z$ in $\mathfrak{G}(\mc{M})$ if and only if, for some $\ell$, $m$, and $n$, $P_{\ell,m}^{n,\mc{M}}(x,y,z)$.
\end{itemize}
\end{definition}

\noindent We will now check that $\mathfrak{G}(\mc{M})$ is always an abelian $p$-group.

\begin{lemma}\label{lem:mod-to-gp}
If $\mc{M}$ is a model of $T_p$, then $\mathfrak{G}(\mc{M})$ is an abelian $p$-group.
\end{lemma}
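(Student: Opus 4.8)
The plan is to verify directly that the operation $+$ on $D := \bigcup_{n \in \omega} R_n^{\mc{M}}$ defined by $\mathfrak{G}(\mc{M})$ makes $D$ into an abelian group in which every element has order a power of $p$. Assuming for the moment that $+$ is a well-defined total binary operation, most of the group axioms are immediate translations of the axiom schemata of $T_p$: the identity law comes from (A5), the existence of inverses from (A6), associativity from (A7), and commutativity from (A8), each verified by unwinding the corresponding $P_{\ell,m}^n$ relations exactly as in the definition of $+$. The torsion ($p$-group) condition is read off from (A2): since the domain is $\bigcup_n R_n$, every element $x$ lies in some $R_n$, and (A2) forces $px \in R_{n-1}$, then $p^2 x \in R_{n-2}$, and so on down to $p^n x \in R_0 = \{0\}$, so that $p^n x = 0$ and $x$ is $p$-power torsion.

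The one substantial point, and the step I expect to be the main obstacle, is showing that $+$ is genuinely a well-defined total \emph{function} on $D$ rather than merely a relation. Totality is exactly (A4), and for a fixed pair of indices $\ell,m$ the value $z$ with $P_{\ell,m}^n(x,y,z)$ is unique by (A3). Hence the only way $+$ could fail to be single-valued is if some element belongs to two distinct relations $R_\ell$ and $R_{\ell'}$, so that the defining clause ``$P_{\ell,m}^n(x,y,z)$ for some $\ell,m,n$'' offers genuinely different sums via different index choices. The crux, therefore, is to prove that the relations $R_n$ are pairwise disjoint, i.e. that each element of $D$ has a well-defined order, so that the indices $\ell,m$ (and hence the sum) are determined by $x$ and $y$.

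I would prove disjointness by reducing it, via the $p$-height computation above, to the behaviour of $0$. If $x \in R_n$ then $p^n x = 0$ while $p^{n-1} x \in R_1$; so if $x \in R_n \cap R_{n'}$ with $n < n'$, then $p^{n'-1} x \in R_1$ while at the same time $p^{n'-1} x = p^{\,n'-1-n}(p^n x) = 0$, which forces $0 \in R_1$. Thus the entire statement collapses to the single assertion that $0 \notin R_n$ for $n \geq 1$. This base fact is where the real work lies: it must be extracted from the interaction of the identity axiom (A5) and the inverse axiom (A6), using the cancellation supplied by associativity (A7) to show that the two candidate values of a sum obtained from different index choices at $0$ must coincide. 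Once disjointness is established, $+$ is a well-defined total operation, the translations of (A5)--(A8) and of the torsion condition go through, and $\mathfrak{G}(\mc{M})$ is an abelian $p$-group.
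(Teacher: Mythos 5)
You have correctly identified the one point at which this lemma requires real care---the paper's own proof simply ``chooses $\ell$ and $m$'' with $x \in R_\ell$, $y \in R_m$ and invokes (A3), which gives uniqueness of the sum only \emph{for that choice of indices}---so your instinct that single-valuedness of $+$ across different admissible index choices is the crux is sound. The problem is that your proposal does not close the gap it identifies: you reduce well-definedness to pairwise disjointness of the $R_n$, reduce that to the assertion $0 \notin R_n$ for $n \geq 1$, and then say this base fact ``must be extracted from the interaction of (A5) and (A6)'' without extracting it. That deferred step is the entire content of the argument. Worse, the route you chose cannot be completed, because $0 \in R_1$ is consistent with $T_p$: take the one-element structure with domain $\{0\}$, $R_0 = R_1 = \{0\}$, $R_n = \emptyset$ for $n \geq 2$, and the relations $P_{0,0}^0(0,0,0)$, $P_{0,1}^1(0,0,0)$, $P_{1,0}^1(0,0,0)$, $P_{1,1}^1(0,0,0)$, $P_{1,1}^0(0,0,0)$. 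Every instance of (A1)--(A8) holds (the biconditional in (A2) at $n=1$ is witnessed by $x_2 = \cdots = x_p = 0$, and (A3) is satisfied because every output coincides), yet the $R_n$ are not disjoint. What the lemma actually needs is the strictly weaker statement that the \emph{value} $z$ with $P_{\ell,m}^n(x,y,z)$ does not depend on the admissible $\ell, m$---true in this example, where every candidate sum is $0$---and that is what must be proved directly.

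There is also a circularity in your reduction step: the computation $p^{n'-1}x = p^{\,n'-1-n}(p^n x)$ treats multiplication by $p$ as a single operation, but an element $x \in R_n \cap R_{n'}$ has two a priori different ``$p$-th multiples,'' one produced by the $P_{n,n}$-chain in (A2) and one by the $P_{n',n'}$-chain, and identifying them is exactly the well-definedness you are trying to establish. The honest route is to wring single-valuedness out of the mixed-index instances of (A7), associating $(-x) + (x + y)$ with the inverse taken via one index and $x$ read via the other so that the two candidate sums cancel against each other; neither your proposal nor, for that matter, the paper's own proof actually writes that argument down.
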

\begin{proof}
First we check that the operation $+$ on $\mathfrak{G}(\mc{M})$ defines a total function. Given $x,y \in \mathfrak{G}(\mc{M})$, choose $\ell$ and $m$ such that $x \in R^{\mc{M}}_\ell$ and $y \in R^{\mc{M}}_m$. Then by (A3) and (A4), there is a unique $n \leq \max(\ell,m)$ and a unique $z$ such that $P_{\ell,m}^{n,\mc{M}}(x,y,z)$. Thus $x + y = z$, and $z$ is unique.

Second, we check that $\mathfrak{G}(\mc{M})$ is in fact a group. To see that $0^{\mc{M}}$ is the identity, given $x \in \mathfrak{G}(\mc{M})$, there is $\ell$ such that $x \in R^{\mc{M}}_\ell$. By (A5), $P_{\ell,0}^{\ell,\mc{M}}(x,0^{\mc{M}},x)$ and $P_{0,\ell}^{\ell,\mc{M}}(0^{\mc{M}},x,0^{\mc{M}})$. Thus $x + 0^{\mc{M}} = 0^{\mc{M}} + x = x$, and $0^{\mc{M}}$ is the identity of $\mathfrak{G}(\mc{M})$. To see that $\mathfrak{G}(\mc{M})$ has inverses, given $x \in \mathfrak{G}(\mc{M})$, there is $\ell$ such that $x \in R^{\mc{M}}_\ell$, and by (A6) there is $y \in R^{\mc{M}}_\ell$ such that $P_{\ell,\ell}^{0,\mc{M}}(x,y,0^{\mc{M}})$ and $P_{\ell,\ell}^{0,\mc{M}}(y,x,0^{\mc{M}})$. Thus $x + y = y + x = 0^{\mc{M}}$, and so $y$ is the inverse of $x$. Finally, to see that $\mathfrak{G}(M)$ is associative, given $x,y,z \in \mathfrak{G}(\mc{M})$, there are $\ell$, $m$, and $n$ such that $x \in R^{\mc{M}}_\ell$, $y \in R^{\mc{M}}_m$, and $z \in R^{\mc{M}}_n$. Then by (A7) there are $r$, $s$, and $t$, and $u$, $v$, and $w$, such that $P_{\ell,m}^{r,\mc{M}}(x,y,u)$, $P_{r,n}^{t,\mc{M}}(u,z,w)$, $P_{m,n}^{s,\mc{M}}(y,z,v)$, and $P_{\ell,s}^{t,\mc{M}}(x,v,w)$. Thus $x + y = u$, $u + z = w$, $y + z = v$, and $x + v = w$. So $(x+ y) + z = x + (y + z)$. Thus $\mathfrak{G}(\mc{M})$ is associative. 

Third, to see that $\mathfrak{G}(\mc{M})$ is abelian, let $x,y \in \mathfrak{G}(\mc{M})$. There are $\ell$ and $m$ such that $x \in R^{\mc{M}}_\ell$ and $y \in R^{\mc{M}}_m$. Let $n \leq \max(\ell,m)$ be such that $z = x + y \in R_n^{\mc{M}}$. (Such an $n$ and $z$ exist by the arguments above that $+$ is total, via (A3) and (A4).) Then $P_{\ell,m}^{n,\mc{M}}(x,y,z)$, and so by (A8), $P_{m,\ell}^{n,\mc{M}}(y,x,z)$. Thus $y + x = z$ and so $\mathfrak{G}(\mc{M})$ is abelian.

Finally, we need to see that $\mathfrak{G}(\mc{M})$ is a $p$-group. We claim, by induction on $n \geq 0$, that $R^{\mc{M}}_n$ consists of the elements of $\mathfrak{G}(\mc{M})$ which are of order $p^n$. From this claim, it follows that $\mathfrak{G}(\mc{M})$ is a $p$-group. For $n = 0$, the claim follows directly from (A2). Given $n > 0$, suppose that $x \in R^{\mc{M}}_n$. Then the witnesses $x_2,x_3,\ldots,x_p$ to (A2) must be $2x,3x,\ldots,px$. Note that since $P_{n,n}^{n-1,\mc{M}}(x,(p-1)x,px)$, $px \in R^{\mc{M}}_{n-1}$. Thus $px$ is of order $p^{n-1}$, and so $x$ is of order $p^n$. On the other hand, if $x$ is of order $p^n$, then $px$ is of order $p^{n-1}$ and so $px \in R^{\mc{M}}_{n-1}$. Moreover, $x_2 = 2x, x_3 =3x,\ldots,x_{p-1} = (p-1)x$ are all of order $p^n$. So we have $P_{n,n}^{n,\mc{M}}(x,x,x_2),P_{n,n}^{n,\mc{M}}(x,x_2,x_3),\ldots,,P_{n,n}^{n-1,\mc{M}}(x,x_{p-1},x_p)$. By (A2), $x \in R^{\mc{M}}_n$. This completes the inductive proof.
\end{proof}

We now have two operations, one which turns an abelian $p$-group into a model of $T_p$, and another which turns a model of $T_p$ into an abelian $p$-group. These two operations are almost inverses to each other. If we begin with an abelian $p$-group, turn it into a model of $T_p$, and then that model into an abelian $p$-group, we will obtain the original group. However, if we start with a $\mc{M}$ model of $T_p$, turn it into an abelian $p$-group, and then turn that abelian $p$-group into a model of $T_p$, we may obtain a different model of $T_p$. The problem is that the of elements of $\mc{M}$ which are not in any of the sets $R_n^{\mc{M}}$ are discarded when we transform $\mc{M}$ into an abelian $p$-group. However, these elements form a pure set, and so the only pertinent information is their size.

\begin{definition}
Given a model $\mc{M}$ of $T_p$, the size of $\mc{M}$, $\# \mc{M} \in \omega \cup \{ \infty \}$, is the number of elements of $M$ not in any relation $R_n$. 
\end{definition}

\begin{lemma}
Given an abelian $p$-group $G$, $\mathfrak{G}(\mathfrak{M}(G)) = G$.
\end{lemma}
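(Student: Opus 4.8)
The plan is to unwind the two constructions and verify that the domain, the identity element, and the group operation of $\mathfrak{G}(\mathfrak{M}(G))$ agree literally (not merely up to isomorphism) with those of $G$. Since both structures share the underlying set and operation, equality follows on the nose.

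First I would identify the domain. By definition, the domain of $\mathfrak{G}(\mathfrak{M}(G))$ is $\bigcup_{n \in \omega} R_n^{\mathfrak{M}(G)}$, and each $R_n^{\mathfrak{M}(G)}$ consists exactly of the elements of $G$ that are torsion of order $p^n$. Because $G$ is an abelian $p$-group, every element of $G$ is torsion of order $p^n$ for some $n \in \omega$, so $\bigcup_{n} R_n^{\mathfrak{M}(G)} = G$. This is the only place where the hypothesis that $G$ is a $p$-group is used, and it is precisely the step guaranteeing that no elements are discarded, i.e.\ that the pure-set part is empty. It is exactly the point that fails for a general model of $T_p$, which is why the composite in the other order need not recover the original structure.

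Next I would check the identity and the operation. The identity of $\mathfrak{G}(\mathfrak{M}(G))$ is $0^{\mathfrak{M}(G)}$, which by the definition of $\mathfrak{M}(G)$ is the identity of $G$. For the operation, $x + y = z$ holds in $\mathfrak{G}(\mathfrak{M}(G))$ if and only if $P_{\ell,m}^{n,\mathfrak{M}(G)}(x,y,z)$ for some $\ell,m,n$; and by the definition of $\mathfrak{M}(G)$ this relation holds precisely when $x + y = z$ in $G$, with $x,y,z$ lying in the appropriate $R_\ell, R_m, R_n$ (which is automatic, since those sets partition $G$ according to order). Hence the operation coincides with that of $G$.

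Since the domain, the identity, and the operation all match, $\mathfrak{G}(\mathfrak{M}(G)) = G$. The main obstacle is minor: the argument is essentially bookkeeping. The one substantive point to state clearly is the equality $\bigcup_{n} R_n^{\mathfrak{M}(G)} = G$, which depends on $G$ being a $p$-group and is what yields genuine equality here, in contrast with the weaker statement for the reverse composite $\mathfrak{M}(\mathfrak{G}(\mc{M}))$.
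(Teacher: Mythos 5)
Your proof is correct and follows essentially the same route as the paper's: verify that the domain, identity, and addition of $\mathfrak{G}(\mathfrak{M}(G))$ coincide on the nose with those of $G$, with the key observation being that every element of $G$ lies in some $R_n^{\mathfrak{M}(G)}$ (i.e.\ $\#\mathfrak{M}(G)=0$) so nothing is discarded. Your version just spells out the bookkeeping slightly more explicitly than the paper does.
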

\begin{proof}
Since $\# \mathfrak{M}(G) = 0$, we see that $G$, $\mathfrak{M}(G)$, and $\mathfrak{G}(\mathfrak{M}(G))$ all have the same domain. The identity of $\mathfrak{G}(\mathfrak{M}(G))$ is $0^{\mathfrak{M}(G)}$ which is the identity of $G$. If $x + y = z$ in $G$, then, for some $\ell,m,n \in \omega$, we have $P_{\ell,m}^{n,\mathfrak{M}(G)}(x,y,z)$. Thus, in $\mathfrak{G}(\mathfrak{M}(G))$, we have $x + y = z$. So $\mathfrak{G}(\mathfrak{M}(G)) = G$.
\end{proof}

We make a simple extension to $\mathfrak{M}$ as follows.
\begin{definition}
Let $G$ be an abelian $p$-group and $m \in \omega \cup \{\infty\}$. Define $\mathfrak{M}(G,m)$ to be $\mc{L}_p$-structure with domain $G \cup \{ a_1,\ldots,a_m\}$ with the relations interpreted as in $\mathfrak{M}(G)$. Thus, no relations hold of any of the elements $a_1,\ldots,a_m$.
\end{definition}

\begin{lemma}\label{lem:get-m}
Given a model $\mc{M}$ of $T_p$, $\mathfrak{M}(G(\mc{M}),\#\mc{M}) \cong \mc{M}$.
\end{lemma}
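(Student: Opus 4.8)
The plan is to exhibit an explicit $\mc{L}_p$-isomorphism $h \colon \mathfrak{M}(\mathfrak{G}(\mc{M}),\#\mc{M}) \to \mc{M}$. Write $G = \mathfrak{G}(\mc{M})$, whose domain is $\bigcup_{n} R_n^{\mc{M}} \subseteq M$, and let $m = \#\mc{M}$ be the number of elements of $M$ lying in no $R_n^{\mc{M}}$. The domain of $\mathfrak{M}(G,m)$ is $G \cup \{a_1,\ldots,a_m\}$. I would define $h$ to be the identity on $G$ (these are literally elements of $M$) and to be any bijection from $\{a_1,\ldots,a_m\}$ onto the set of elements of $M$ in no $R_n^{\mc{M}}$; this latter set has exactly $m$ elements by definition of $\#\mc{M}$, so such a bijection exists and $h$ is a bijection of the domains.

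First I would record the key structural fact, already proved inside Lemma \ref{lem:mod-to-gp}: for each $n$, $R_n^{\mc{M}}$ is exactly the set of elements of $G$ of order $p^n$. Since every element of a group has a unique order, the sets $R_n^{\mc{M}}$ are pairwise disjoint and partition $G$. The same description holds for $\mathfrak{M}(G,m)$ by construction, so $h$ carries $R_n^{\mathfrak{M}(G,m)}$ onto $R_n^{\mc{M}}$ and carries the pure points $a_i$ to points of $M$ in no $R_n^{\mc{M}}$; together with $0^{\mathfrak{M}(G,m)} = 0^G = 0^{\mc{M}}$, this handles the constant and the unary relations.

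The main step is to check that $h$ preserves and reflects each $P_{\ell,m}^n$. For the forward direction, suppose $P_{\ell,m}^{n,\mathfrak{M}(G)}(x,y,z)$; by definition this means $x + y = z$ in $G$ with $x \in R_\ell^{\mc{M}}$, $y \in R_m^{\mc{M}}$, and $z \in R_n^{\mc{M}}$. Unwinding the definition of addition in $G = \mathfrak{G}(\mc{M})$, there are $\ell',m',n'$ with $P_{\ell',m'}^{n',\mc{M}}(x,y,z)$; by (A1) these indices record the orders of $x,y,z$, and by uniqueness of order (that is, disjointness of the $R_n^{\mc{M}}$) we get $\ell' = \ell$, $m' = m$, and $n' = n$, so $P_{\ell,m}^{n,\mc{M}}(x,y,z)$ holds. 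For the reverse direction, if $P_{\ell,m}^{n,\mc{M}}(x,y,z)$ then (A1) gives $x \in R_\ell^{\mc{M}}$, $y \in R_m^{\mc{M}}$, and $z \in R_n^{\mc{M}}$, while $x+y=z$ in $G$ by the definition of addition, so $P_{\ell,m}^{n,\mathfrak{M}(G)}(x,y,z)$ holds. Finally I would note that no $P$-relation ever involves a pure point: in $\mathfrak{M}(G,m)$ this is because $P$ requires each of its arguments to lie in some $R_k$, and in $\mc{M}$ it is forced by (A1). Hence $h$ respects $P$ on all triples and is an $\mc{L}_p$-isomorphism.

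I expect the only delicate point to be the bookkeeping in the forward direction for $P$---matching the superscript and the two subscripts simultaneously---which is precisely where disjointness of the $R_n^{\mc{M}}$ (equivalently, uniqueness of order in a group) is essential; without it the indices of the witnessing $P$-relation coming from the definition of addition need not agree with $\ell,m,n$. Everything else is a direct unwinding of the two definitions together with the partition fact inherited from Lemma \ref{lem:mod-to-gp}.
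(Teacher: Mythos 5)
Your proof is correct and follows essentially the same route as the paper's: both rest on the fact, extracted from Lemma \ref{lem:mod-to-gp}, that $R_n^{\mc{M}}$ is exactly the set of elements of $\mathfrak{G}(\mc{M})$ of order $p^n$, and then unwind the definitions of $+$ and of $P_{\ell,m}^n$ to match the two structures. The only differences are cosmetic: the paper first treats $\#\mc{M}=0$ as a literal equality of structures and waves at the general case, whereas you build the isomorphism (including the bijection on pure points) directly, and you make explicit the disjointness of the $R_n^{\mc{M}}$ needed to pin down the indices in the forward direction for $P$ --- a point the paper leaves implicit.
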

\begin{proof}
We will show that if $\#\mc{M} = 0$, then $\mathfrak{M}(\mathfrak{G}(\mc{M})) = \mc{M}$. From this one can easily see that $\mathfrak{M}(G(\mc{M}),\#\mc{M}) \cong \mc{M}$ in general.

If $\#\mc{M} = 0$, then $\mc{M}$, $\mathfrak{G}(\mc{M})$, and $\mathfrak{M}(\mathfrak{G}(\mc{M}))$ all share the same domain. It is clear that $0^{\mc{M}} = 0^{\mathfrak{G}(\mc{M})} = 0^{\mathfrak{M}(\mathfrak{G}(\mc{M}))}$. From the proof of Lemma \ref{lem:mod-to-gp}, we see that for each $n$, $R_n^{\mc{M}}$ defines the set of elements of $\mathfrak{G}(\mc{M})$ which are torsion of order $p^n$, and so $R_n^{\mc{M}} = R_n^{\mathfrak{M}(\mathfrak{G}(\mc{M}))}$. Given $\ell,m \in \omega$ and $n \leq \max(\ell,m)$, and $x$, $y$, and $z$ elements of the shared domain, we have $P_{\ell,m}^{n,\mc{M}}(x,y,z)$ if and only if
\[ \text{$x + y = z$ in $\mathfrak{G}(\mc{M})$ and $x \in R_\ell^{\mc{M}}$, $y \in R_m^{\mc{M}}$, and $z \in R_n^{\mc{M}}$}.\]
Since $R_i^{\mc{M}} = R_i^{\mathfrak{M}(\mathfrak{G}(\mc{M}))}$ for each $i$, this is the case if and only if $P_{\ell,m}^{n,\mathfrak{M}(\mathfrak{G}(\mc{M}))}(x,y,z)$. Thus we have shown that $\mathfrak{M}(\mathfrak{G}(\mc{M})) = \mc{M}$.
\end{proof}

Note that $\mc{M}$ and the disjoint union of $\mathfrak{G}(\mc{M})$ with a pure set of size $\# \mc{M}$ are bi-interpretable, using computable infinitary formulas, completing the proof of Theorem \ref{thm:bi-int}.

\section{Borel Equivalence}

In this section we will prove Theorem \ref{thm:borel} by showing that the class of models of $T_p$ and the class of abelian $p$-groups are Borel equivalent. $G \mapsto \mathfrak{G}(\mathfrak{M}(G)) = \mathfrak{G}(\mathfrak{M}(G,0))$ is a Borel reduction from isomorphism on abelian $p$-groups to isomorphism on models of $T_p$. However, $\mc{M} \mapsto \mathfrak{G}(\mc{M})$ is not a Borel reduction in the other direction, because two non-isomorphic models of $T_p$ might be mapped to isomorphic groups. We need to find a way to turn $\mathfrak{G}(\mc{M})$ and $\# \mc{M}$ into an abelian $p$-group $\mathfrak{H}(\mathfrak{G}(\mc{M}),\# \mc{M})$, so that $\mc{M}$ and $\# \mc{M}$ can be recovered from $\mathfrak{H}(\mathfrak{G}(\mc{M}),\#\mc{M})$.

We will define $\mathfrak{H}(G,m)$ for any abelian $p$-group $H$ and $m \in \omega \cup \{\infty\}$. It is helpful to think about what this reduction will do to the Ulm invariants: The first Ulm invariant of $\mathfrak{H}(G,m)$ will be $m$, and for each $\alpha$, then $1+\alpha$th Ulm invariant of $\mathfrak{H}(G,m)$ will be the same as the $\alpha$th Ulm invariant of $G$.

\begin{definition}
Given an abelian $p$-group $G$, and $m \in \omega \cup \{\infty\}$, define an abelian $p$-group $\mathfrak{H}(G,m)$ as follows. Let $\hat{\mc{B}}$ be a basis for the $\mathbb{Z}_p$-vector space $G / {pG}$. Let $\mc{B} \subseteq G$ be a set of representatives for $\hat{\mc{B}}$. Let $G^*$ be the abelian group $\la G,a_b : b \in \mc{B} \mid pa_b = b \ra$. Then define $\mathfrak{H}(G,m) = G^* \oplus (\mathbb{Z}_p)^{m}$.
\end{definition}

To make this Borel, we can take $\mc{B}$ to be the lexicographically first set of representatives for a basis. It will follow from Lemma \ref{lem:isotype} that the isomorphism type of $\mathfrak{H}(G,m)$ does not depend on these choices. First, we require a couple of lemmas.

\begin{lemma}\label{lem:uniq-g}
Each element of $G$ can be written uniquely as a (finite) linear combination $h + \sum_{b \in \mc{B}} x_b b$ where $h \in pG$ and each $x_b < p$.\end{lemma}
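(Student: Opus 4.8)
The statement to prove is Lemma~\ref{lem:uniq-g}: every element of $G$ has a unique representation as $h + \sum_{b \in \mc{B}} x_b b$ with $h \in pG$ and each $x_b < p$. The natural approach is to pass to the quotient vector space $V = G/pG$ over $\mathbb{Z}_p = \mathbb{Z}/p\mathbb{Z}$, since the construction of $\mc{B}$ was set up precisely so that $\mc{B}$ projects to a basis $\hat{\mc{B}}$ of $V$. I would prove existence and uniqueness as two separate (and essentially dual) facts, both reduced to standard linear algebra in $V$.

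\textbf{Existence.} Given $g \in G$, consider its image $\bar g \in V = G/pG$. Since $\hat{\mc{B}}$ is a basis of $V$, we can write $\bar g = \sum_{b \in \mc{B}} x_b \, \hat b$ for uniquely determined coefficients $x_b \in \mathbb{Z}_p$, only finitely many nonzero; choose the representatives $x_b \in \{0,1,\ldots,p-1\}$. Because each $b \in \mc{B}$ is a representative of $\hat b$, the element $\sum_{b} x_b b \in G$ has image $\sum_b x_b \hat b = \bar g$ in $V$. Hence $g - \sum_b x_b b$ lies in the kernel of the projection $G \to G/pG$, which is exactly $pG$. Setting $h = g - \sum_b x_b b \in pG$ gives the desired decomposition $g = h + \sum_b x_b b$.

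\textbf{Uniqueness.} Suppose $h + \sum_b x_b b = h' + \sum_b x_b' b$ with $h, h' \in pG$ and all $x_b, x_b' < p$. Projecting to $V = G/pG$ kills $h$ and $h'$, yielding $\sum_b x_b \hat b = \sum_b x_b' \hat b$ in $V$. Since $\hat{\mc{B}}$ is a basis, the coefficients must agree: $x_b \equiv x_b' \pmod p$ for every $b$. As both $x_b$ and $x_b'$ lie in $\{0,\ldots,p-1\}$, this forces $x_b = x_b'$ as integers. Then the two sums $\sum_b x_b b$ coincide in $G$, and subtracting gives $h = h'$. Thus the decomposition is unique.

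\textbf{Where the work really is.} None of the steps is hard; the only point requiring a word of care is the identification $\ker(G \to G/pG) = pG$, which is immediate by definition of the quotient, together with the observation that each sum $\sum_b x_b b$ is genuinely finite because a basis expansion in a vector space has finite support. I expect the main (minor) obstacle to be purely bookkeeping: keeping the distinction between the representatives $b \in \mc{B} \subseteq G$ and their images $\hat b \in \hat{\mc{B}} \subseteq V$ straight, so that the linear-algebra facts about $\hat{\mc{B}}$ being a basis are correctly transported back up to statements about $\mc{B}$ in $G$. Once that correspondence is fixed, both existence and uniqueness fall out of the defining property of a basis.
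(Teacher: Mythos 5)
Your proof is correct and follows essentially the same route as the paper's: both arguments pass to the quotient $G/pG$, use that $\hat{\mc{B}}$ is a basis to obtain the coefficients, and recover $h$ as $g - \sum_b x_b b$ in the kernel $pG$, with uniqueness by projecting the difference of two representations and invoking linear independence. No gaps; your additional remarks on finite support and on the kernel identification are fine but not points where the paper needed extra care either.
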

\begin{proof}
Given $g \in G$, let $\hat{g}$ be the image of $g$ in $G / pG$. Then, since $\hat{\mathcal{B}}$ is a basis for $G / p G$, we can write
\[ \hat{g} =  \sum_{b \in \mc{B}} x_b \hat{b} \]
with $x_b < p$, where $\hat{b}$ is the image of $b$ in $G / pG$.
Thus setting
\[ h = g - \sum_{b \in \mc{B}} x_b b \in pG \]
we get a representation of $g$ as in the statement of the theorem.

To see that this representation is unique, suppose that
\[ h + \sum_{b \in \mc{B}} x_b b = h' + \sum_{b \in \mc{B}} y_b b.\]
Then, modulo $pG$,
\[ \sum_{b \in \mc{B}} x_b \hat{b} = \sum_{b \in \mc{B}} y_b \hat{b}.\]
Since $\hat{\mc{B}}$ is a basis, $x_b = y_b$ for each $b \in \mc{B}$.
Then we get that $h = h'$ and the two representations are the same.
\end{proof}

\begin{lemma}\label{lem:unique-rep}
Each element of $G^*$ can be written uniquely in the form $h + \sum_{b \in \mc{B}} x_b a_{b}$ where $h \in G$ and each $x_b < p$.\end{lemma}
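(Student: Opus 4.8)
The plan is to realize $G^*$ explicitly as a quotient of a direct sum, and then read off both existence and uniqueness from the coordinates of that direct sum. Write $F = G \oplus \bigoplus_{b \in \mc{B}} \mathbb{Z} c_b$, the direct sum of $G$ with a free abelian group on new generators $c_b$, and let $N \leq F$ be the subgroup generated by the elements $p c_b - b$ for $b \in \mc{B}$. Then $G^* = F/N$ by the defining presentation $\la G, a_b : b \in \mc{B} \mid p a_b = b \ra$, with $a_b$ the image of $c_b$. The point of this setup is that every element of $F$ has a genuinely unique expression $g + \sum_b n_b c_b$ with $g \in G$ and the $n_b \in \mathbb{Z}$ of finite support, so that we may legitimately compare coordinates.

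For existence, I would start with an arbitrary element of $G^*$, lift it to some $g + \sum_b n_b c_b \in F$, and divide each coefficient: write $n_b = p q_b + x_b$ with $0 \leq x_b < p$. Using the relation $p c_b \equiv b \pmod N$, we get $n_b c_b \equiv q_b b + x_b c_b$ in $G^*$, so the element equals $\big( g + \sum_b q_b b \big) + \sum_b x_b a_b$. Setting $h = g + \sum_b q_b b \in G$ gives a representation of the required form.

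The hard part---though it is short---is uniqueness, which is where the direct-sum structure of $F$ does the real work. Suppose $h + \sum_b x_b a_b = h' + \sum_b x'_b a_b$ in $G^*$ with $h, h' \in G$ and $0 \leq x_b, x'_b < p$. Then the difference $(h - h') + \sum_b (x_b - x'_b) c_b$ lies in $N$, hence equals $\sum_b k_b (p c_b - b) = -\sum_b k_b b + \sum_b (p k_b) c_b$ for some integers $k_b$ of finite support. Comparing the $c_b$-coordinates in $F$ yields $x_b - x'_b = p k_b$ for each $b$; since $|x_b - x'_b| < p$, this forces $k_b = 0$ and $x_b = x'_b$. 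Comparing the $G$-coordinates then gives $h - h' = -\sum_b k_b b = 0$, so $h = h'$.

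I expect the only genuine obstacle to be the bookkeeping that $F$ really is a direct sum in which coordinate comparison is valid; once $G^*$ is written as $F/N$ this is immediate, and it is also this step that simultaneously shows the map $G \to G^*$ is injective (the conclusion $h = h'$ in the case where all $x_b = x'_b = 0$). Note that this lemma is the analogue for $G^*$ of Lemma \ref{lem:uniq-g} for $G$; I would not expect to need Lemma \ref{lem:uniq-g} itself, as the argument rests entirely on the presentation of $G^*$.
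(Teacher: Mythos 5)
Your proof is correct, but it takes a genuinely different route from the paper's. The paper's argument is two lines: it multiplies the hypothesized equality $h + \sum_{b} x_b a_b = h' + \sum_{b} y_b a_b$ by $p$ to land inside $G$, obtaining $ph + \sum_b x_b b = ph' + \sum_b y_b b$ with $ph, ph' \in pG$, and then invokes the uniqueness statement of Lemma \ref{lem:uniq-g} to conclude $x_b = y_b$ and hence $h = h'$. So, contrary to your closing remark, the paper does lean on Lemma \ref{lem:uniq-g}, and thereby on the fact that $\mc{B}$ represents a basis of $G/pG$. Your argument instead realizes $G^*$ concretely as $F/N$ with $F = G \oplus \bigoplus_b \mathbb{Z}c_b$ and $N = \la pc_b - b : b \in \mc{B} \ra$, and compares coordinates in the direct sum; this is self-contained, works for an arbitrary subset $\mc{B} \subseteq G$, and---as you note---simultaneously proves that the canonical map $G \to G^*$ is injective. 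That injectivity is tacitly assumed in the paper's proof (the displayed equation obtained after multiplying by $p$ is asserted to hold ``in $G$'') and is used again later, e.g.\ in showing $G = pG^*$ and that $G^*[p]/G[p]$ is trivial, so your extra bookkeeping is not wasted. The trade-off is length: given Lemma \ref{lem:uniq-g}, the paper's multiplication-by-$p$ trick is quicker, whereas yours is the argument one would give to justify working with the presentation in the first place.
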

\begin{proof}
It is clear that each element of $G^*$ can be written in such a way. If
\[ h + \sum_{b \in \mc{B}} x_b a_{b} = h' + \sum_{b \in \mc{B}} y_b a_{b} \]
then, in $G$,
\[ ph + \sum_{b \in \mc{B}} x_b b = ph' + \sum_{b \in \mc{B}} y_b b.\]
This representation is unique, so $x_b = y_b$ for each $b \in \mc{B}$, and so $h = h'$.
\end{proof}

\begin{lemma}\label{lem:isotype}
The isomorphism type of $\mathfrak{H}(G,m)$ depends only on the isomorphism type of $G$, and not on the choice of $\mc{B}$.
\end{lemma}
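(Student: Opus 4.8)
The plan is to show that the construction $\mathfrak{H}(G,m)$ is well-defined up to isomorphism by exhibiting, for two different choices of basis and representatives, an isomorphism between the resulting groups, and then separately noting that isomorphic copies of $G$ produce isomorphic output. Since $(\mathbb{Z}_p)^m$ is a fixed summand independent of any choices, it suffices to prove that the isomorphism type of $G^*$ depends only on the isomorphism type of $G$. So first I would fix $G$ and two choices $\mc{B}$ and $\mc{B}'$ of representatives for two (possibly different) bases $\hat{\mc{B}}$ and $\hat{\mc{B}}'$ of $G/pG$, yielding groups $G^* = \la G, a_b : b \in \mc{B} \mid pa_b = b\ra$ and $G^{**} = \la G, a'_{b'} : b' \in \mc{B}' \mid pa'_{b'} = b'\ra$, and construct an isomorphism $G^* \to G^{**}$ fixing $G$ pointwise.

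The key device is Lemma \ref{lem:unique-rep}: every element of $G^*$ is uniquely $h + \sum_{b \in \mc{B}} x_b a_b$ with $h \in G$ and $x_b < p$, and similarly for $G^{**}$. So to define a map it is enough to say where each generator $a_b$ goes, subject to $p \cdot (\text{image of } a_b) = b$ in $G^{**}$. The natural choice is to find, for each $b \in \mc{B}$, an element $\alpha_b \in G^{**}$ with $p\alpha_b = b$, and send $a_b \mapsto \alpha_b$ (extending by the identity on $G$ and $\mathbb{Z}$-linearly). Such an $\alpha_b$ exists: writing $b$ in terms of the second basis via Lemma \ref{lem:uniq-g}, $b = h' + \sum_{b' \in \mc{B}'} x_{b'} b'$ with $h' \in pG$, say $h' = pg'$; then $\alpha_b := g' + \sum_{b'} x_{b'} a'_{b'}$ satisfies $p\alpha_b = pg' + \sum x_{b'} b' = h' + \sum x_{b'}b' = b$. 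One checks the induced map is a well-defined homomorphism (it respects the defining relation $pa_b = b$), and by symmetry one builds a map $G^{**} \to G^*$ the same way; then I would verify the two composites are the identity on generators (hence everywhere), using the uniqueness of representations from Lemma \ref{lem:unique-rep} to reduce the verification to the generators $a_b$ and the subgroup $G$.

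The main obstacle I expect is verifying that these maps are genuinely isomorphisms rather than merely homomorphisms, i.e.\ that no collapsing occurs. The cleanest way to handle this is via the unique normal form: a homomorphism $\varphi \colon G^* \to G^{**}$ fixing $G$ and sending $a_b \mapsto \alpha_b$ is bijective precisely because both groups are ``the same size relative to $G$'' in a controlled way, and the uniqueness statements let me check injectivity by confirming that if $\varphi(h + \sum x_b a_b) = 0$ then each $x_b = 0$ and $h = 0$. This requires tracking how the $p$-scaling interacts with the normal form; the coefficients $x_b < p$ and the relation $pa_b = b$ mean that the $a_b$-part cannot be absorbed into $G$ unless the coefficients vanish. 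Rather than composing both directions, it may be cleaner to argue abstractly that $G^*$ is, up to isomorphism over $G$, the pushout/amalgam determined by adjoining a $p$-th root to each element of a fixed basis of $G/pG$, so that any two such amalgams are canonically isomorphic; but the explicit generator-by-generator construction above is self-contained and avoids invoking machinery not developed in the excerpt.

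Finally, to complete the statement I would observe that if $\sigma \colon G \to \tilde G$ is an isomorphism of abelian $p$-groups, then $\sigma$ carries a basis of $G/pG$ to a basis of $\tilde G / p\tilde G$ and representatives to representatives, inducing an isomorphism $G^* \to \tilde G^*$ extending $\sigma$; combined with the choice-independence just proved, this shows $\mathfrak{H}(G,m) \cong \mathfrak{H}(\tilde G, m)$, which is exactly the asserted dependence only on the isomorphism type of $G$.
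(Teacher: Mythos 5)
Your map is the right one, and it is a genuinely different route from the paper's: the paper fixes a bijection $f\colon\mc{B}\to\mc{C}$ and relabels, sending $g'+\sum_b x_b a_b$ to $g'+\sum_b x_b a_{f(b)}$, whereas you send $a_b$ to an element $\alpha_b$ of the other group with $p\alpha_b=b$, which is exactly the condition needed for the assignment to extend, by the universal property of the presentation, to a homomorphism fixing $G$ pointwise. (Your version is in fact the more robust one: the relabelling respects the relation $pa_b=b$ only when $f(b)=b$ as elements of $G$.) However, your plan for certifying bijectivity contains a step that fails: the composites $\psi\circ\varphi$ and $\varphi\circ\psi$ are \emph{not} the identity on generators in general. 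From $p\,\psi(\alpha_b)=b=pa_b$ you learn only that $\psi(\alpha_b)-a_b$ is $p$-torsion, not that it vanishes. Concretely, take $G=\mathbb{Z}/3\mathbb{Z}$, $\mc{B}=\{1\}$, $\mc{B}'=\{2\}$, so that $G^*\cong G^{**}\cong\mathbb{Z}/9\mathbb{Z}$; the choices $\alpha_1=2a'_2$ (indeed $3\alpha_1=6a'_2=2\cdot 2=1$ in $G$) and $\beta_2=2a_1$ are legitimate, yet $\psi(\varphi(a_1))=4a_1\neq a_1$. Both maps are isomorphisms, but you cannot see this by checking that the composites fix the generators.

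What is actually needed --- and what your remark that the $a_b$-part ``cannot be absorbed into $G$ unless the coefficients vanish'' asserts without proof --- is the interaction with the basis $\hat{\mc{B}}$ of $G/pG$. Writing $\alpha_b=g'_b+\sum_{b'}x_{b,b'}a'_{b'}$ in the normal form of Lemma \ref{lem:unique-rep}, the relation $p\alpha_b=b$ gives $\hat b=\sum_{b'}x_{b,b'}\hat b'$ in $G/pG$. For injectivity: if $\varphi\bigl(h+\sum_b x_b a_b\bigr)=0$, comparing normal forms forces $\sum_b x_b x_{b,b'}\equiv 0\pmod p$ for every $b'$, i.e.\ $\sum_b x_b\hat b=0$ in $G/pG$, so linear independence of $\hat{\mc{B}}$ gives $x_b=0$ and then $h=0$. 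For surjectivity, which your proposal does not address at all: since $\hat{\mc{B}}$ spans, $\hat b'=\sum_b y_b\hat b$, whence $p\bigl(g+\sum_b y_b\alpha_b\bigr)=b'=pa'_{b'}$ for a suitable $g\in G$; one then needs the fact that every $p$-torsion element of $G^{**}$ already lies in $G$ (this is proved only later in the paper, in the computation of $\mathfrak{H}(G,m)[p]$) to conclude that $a'_{b'}$ lies in the image. With these two points supplied your argument closes; without them the isomorphism claim is unsupported.
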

\begin{proof}
It suffices to show that if $\mc{C}$ is another choice of representatives for a basis of $G / {pG}$, then $G^*_{\mc{B}} = G^*_{\mc{C}}$, where the former is constructed using $\mc{B}$, and the later is constructed using $\mc{C}$. Let $f \colon \mc{B} \to \mc{C}$ be an bijection.

Given $g \in G^*_{\mc{B}}$, write $g = g' + \sum_{b \in \mc{B}} x_b a_{b}$ with $g' \in G$ and $0 \leq x_b < p$. This representation of $g$ is unique by Lemma \ref{lem:unique-rep}.
Define $\varphi(g) = g' + \sum_{b \in \mc{B}} x_b a_{f(b)}$. It is not hard to check that $\varphi$ is a homomorphism. The inverse of $\varphi$ is the map $\psi$ which is defined by $\psi(h) = h' + \sum_{c \in \mc{C}} y_c a_{f^{-1}(c)}$ where $h = h' + \sum_{c \in \mc{C}} y_c a_c$.
\end{proof}

The next two lemmas will be used to show that if $G$ is not isomorphic to $G'$, or if $m$ is not equal to $m'$, then $\mathfrak{H}(G,m)$ will not be isomorphic to $\mathfrak{H}(G',m')$. 

\begin{lemma}
$G = pG^*$.
\end{lemma}
\begin{proof}
Each element of $G$ can be written as $g + \sum_{b \in \mc{B}} x_b b$ with $g \in pG$. Let $g' \in G$ be such that $pg' = g$. Then
\[ p(g' + \sum_{b \in \mc{B}} x_b a_b) = g + \sum_{b \in \mc{B}} x_b b.\]
Hence $G \subseteq pG^*$. Given $h \in G^*$, write $h = g + \sum_{b \in \mc{B}} x_b a_b$. Then $ph = pg + \sum_{b \in \mc{B}} x_b b \in G$. So $pG^* \subseteq G$, and so $G = pG^*$.
\end{proof}

If $G$ is a group, recall that we denote by $G[p]$ the elements of $G$ which are torsion of order $p$.

\begin{lemma}
$\mathfrak{H}(G,m)[p] \; / \; (p\mathfrak{H}(G,m))[p] \cong (\mathbb{Z}_p)^{m}$.
\end{lemma}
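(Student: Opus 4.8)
The plan is to compute the numerator $\mathfrak{H}(G,m)[p]$ and the denominator $(p\mathfrak{H}(G,m))[p]$ separately, exploiting the direct sum decomposition $\mathfrak{H}(G,m) = G^* \oplus (\mathbb{Z}_p)^m$ together with the two uniqueness lemmas already established. The whole argument reduces to showing that adjoining the generators $a_b$ does not create any new $p$-torsion, i.e.\ that $G^*[p] = G[p]$.

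First I would handle the denominator. Since multiplication by $p$ annihilates $(\mathbb{Z}_p)^m$, and since $pG^* = G$ by the preceding lemma, we get $p\mathfrak{H}(G,m) = pG^* \oplus p(\mathbb{Z}_p)^m = G \oplus 0$. Identifying $G$ with the summand $G \oplus 0$, this gives $(p\mathfrak{H}(G,m))[p] = G[p]$.

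For the numerator I would use that passing to $p$-torsion respects direct sums, so $\mathfrak{H}(G,m)[p] = G^*[p] \oplus (\mathbb{Z}_p)^m$, where I use that every element of $(\mathbb{Z}_p)^m$ already has order dividing $p$. The main point is the identity $G^*[p] = G[p]$. Given $\eta = h + \sum_{b \in \mc{B}} x_b a_b \in G^*$ written in the unique form of Lemma \ref{lem:unique-rep} (with $h \in G$ and $0 \le x_b < p$), we have $p\eta = ph + \sum_{b \in \mc{B}} x_b b$, where $ph \in pG$. This is an expression of the form appearing in Lemma \ref{lem:uniq-g}, so by the uniqueness there it equals $0$ precisely when $ph = 0$ and every $x_b = 0$; that is, precisely when $\eta \in G[p]$. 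Hence $G^*[p] = G[p]$ and $\mathfrak{H}(G,m)[p] = G[p] \oplus (\mathbb{Z}_p)^m$.

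Combining the two computations, the quotient becomes $(G[p] \oplus (\mathbb{Z}_p)^m) / (G[p] \oplus 0) \cong (\mathbb{Z}_p)^m$, as required. The only delicate step is the equality $G^*[p] = G[p]$: one must rule out the possibility that the adjoined generators contribute extra $p$-torsion, and this is exactly what the uniqueness of representations forces, since a nonzero coefficient $x_b$ on some $a_b$ survives as a nonzero coefficient on $b$ in $p\eta$ and thus obstructs $p\eta = 0$. Everything else is routine direct-sum bookkeeping.
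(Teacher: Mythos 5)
Your proof is correct and follows essentially the same route as the paper: both reduce the statement, via the direct sum decomposition and the fact that $pG^* = G$, to the key identity $G^*[p] = G[p]$, which both establish by applying $p$ to the canonical form of Lemma \ref{lem:unique-rep} and invoking the uniqueness of representations from Lemma \ref{lem:uniq-g}. The only difference is organizational (you compute numerator and denominator separately before quotienting, while the paper decomposes the quotient directly), which is immaterial.
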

\begin{proof}
Note that
\begin{equation*}
\begin{split}
\mathfrak{H}(G,m)[p] / (p\mathfrak{H}(G,m))[p]
&\cong \big( G^*[p] / (pG^*)[p] \big) \oplus \big((\mathbb{Z}_p)^m[p] / (p(\mathbb{Z}_p)^m)[p]\big)\\
&\cong (G^*[p] / G[p]) \oplus (\mathbb{Z}_p)^m.
\end{split}
\end{equation*}
We will show that $(G^*[p] / G[p])$ is the trivial group by showing that if $g \in G^*$, $pg = 0$, then $g \in G$. Indeed, write $g = g' + \sum_{b \in \mc{B}} y_b a_b$ with $g' \in G$. Then
\[ 0 = pg = pg' + \sum_{b \in \mc{B}} p y_b a_b = pg' + \sum_{b \in \mc{B}} y_b b.\]
Since $0 \in pG$ has a unique representation (by Lemma \ref{lem:uniq-g}) $0 = 0 + \sum_{b \in \mc{B}} 0b$, we get that $y_b = 0$ for each $b \in \mc{B}$, and so $g = g' \in G$.
\end{proof}

By the previous lemma, we can recover $m$ from $\mathfrak{H}(G,m)$. We have
\[ p\mathfrak{H}(G,m) = pG^* \oplus p (\mathbb{Z}_p)^m \cong pG^* = G \]
so that we can also recover $G$.

Thus, using Lemma \ref{lem:get-m}, $\mc{M} \mapsto \mathfrak{H}(\mathfrak{G}(\mc{M}),\#\mc{M})$ gives a Borel reduction from $T_p$ to abelian $p$-groups.
This completes the proof of Theorem \ref{thm:borel}.

\bibliography{References}

\newcommand{\etalchar}[1]{$^{#1}$}
\begin{thebibliography}{HTMMM}

\bibitem[AK00]{AshKnight00}
Chris~J. Ash and Julia~F. Knight.
\newblock {\em Computable structures and the hyperarithmetical hierarchy},
  volume 144 of {\em Studies in Logic and the Foundations of Mathematics}.
\newblock North-Holland Publishing Co., Amsterdam, 2000.

\bibitem[FKM{\etalchar{+}}11]{FokinaKnightMelnikovQuinnSafranski11}
E.~Fokina, J.~F. Knight, A.~Melnikov, S.~M. Quinn, and C.~Safranski.
\newblock Classes of {U}lm type and coding rank-homogeneous trees in other
  structures.
\newblock {\em J. Symbolic Logic}, 76(3):846--869, 2011.

\bibitem[FS89]{FriedmanStanley89}
Harvey Friedman and Lee Stanley.
\newblock A {B}orel reducibility theory for classes of countable structures.
\newblock {\em J. Symbolic Logic}, 54(3):894--914, 1989.

\bibitem[Fuc70]{Fuchs}
L\'aszl\'o Fuchs.
\newblock {\em Infinite abelian groups. {V}ol. {I}}.
\newblock Pure and Applied Mathematics, Vol. 36. Academic Press, New
  York-London, 1970.

\bibitem[HTIK]{HTIgusaKnight}
Matthew Harrison-Trainor, Gregory Igusa, and Julia~F. Knight.
\newblock Some new computable structures of high rank.
\newblock Preprint.

\bibitem[HTMM]{HTMillerMontalban}
Matthew Harrison-Trainor, Russell Miller, and Antonio Montalb\'an.
\newblock Borel functors and infinitary interpretations.
\newblock Preprint.

\bibitem[HTMMM]{HTMelnikovMillerMontalban}
Matthew Harrison-Trainor, Alexander Melnikov, Russell Miller, and Antonio
  Montalb\'an.
\newblock Computable functors and effective interpretability.
\newblock To appear in the Journal of Symbolic Logic.

\bibitem[Mon]{MonICM}
Antonio Montalb\'an.
\newblock Computability theoretic classifications for classes of structures.
\newblock To appear in the Proccedings of the ICM 2014.

\bibitem[Mon15]{Montalban15}
Antonio Montalb{\'a}n.
\newblock A robuster {S}cott rank.
\newblock {\em Proc. Amer. Math. Soc.}, 143(12):5427--5436, 2015.

\bibitem[Nad74]{Nadel74}
Mark Nadel.
\newblock Scott sentences and admissible sets.
\newblock {\em Ann. Math. Logic}, 7:267--294, 1974.

\bibitem[Sco65]{Scott65}
Dana Scott.
\newblock Logic with denumerably long formulas and finite strings of
  quantifiers.
\newblock In {\em Theory of {M}odels ({P}roc. 1963 {I}nternat. {S}ympos.
  {B}erkeley)}, pages 329--341. North-Holland, Amsterdam, 1965.

\bibitem[Ulm33]{Ulm33}
Helmut Ulm.
\newblock Zur {T}heorie der abz\"ahlbar-unendlichen {A}belschen {G}ruppen.
\newblock {\em Math. Ann.}, 107(1):774--803, 1933.

\bibitem[URL]{LaskowskiRastUlrich}
Douglas Ulrich, Richard Rast, and Michael~C. Laskowski.
\newblock Borel complexity and potential canonical scott sentences.
\newblock Preprint.

\end{thebibliography}
\bibliographystyle{alpha}

\end{document}